\def\nc{\newcommand}
\def\om{\omega}
 \def\Om{\Omega}
\nc\pa{\partial}
\def\longequals{\mathbin{=\kern-2pt=}}
\DeclareMathOperator*{\esssup}{ess\,sup}
\nc\CC{\mathbb{C}}
\nc\RR{\mathbb{R}}
\nc\QQ{\mathbb{Q}}
\nc\ZZ{\mathbb{Z}}
\nc\NN{\mathbb{N}}
\nc\m[1]{\left| #1\right|}
\nc\norm[1]{\left\| #1\right\|}
\def\longequals{\mathbin{=\kern-2pt=}}
\newtheorem{theorem}{Theorem}[section]
\newtheorem{lemma}[theorem]{Lemma}
\newtheorem{corollary}[theorem]{Corollary}
\newtheorem{proposition}[theorem]{Proposition}
\newtheorem{definition}[theorem]{Definition}% Use {\rm ...}
\numberwithin{equation}{section}
\begin{document}

\title[Navier-Stokes equations in Lorentz spaces]
{The Navier-Stokes  equations in nonendpoint borderline Lorentz spaces}

\author[Nguyen Cong Phuc]
{Nguyen Cong Phuc}
\address{Department of Mathematics,
Louisiana State University,
303 Lockett Hall, Baton Rouge, LA 70803, USA.}
\email{pcnguyen@math.lsu.edu}

%\thanks{$^*$Supported in part by NSF Grant DMS-0000000}

\begin{abstract} It is shown both locally and globally that $L_t^{\infty}(L_x^{3,q})$ solutions to the three-dimensional Navier-Stokes equations are regular provided $q\not=\infty$. Here $L_x^{3,q}$, $0<q\leq\infty$, is an increasing scale of  Lorentz spaces  containing $L^3_x$. Thus the result provides an  improvement of a result by Escauriaza, Seregin and {\v S}ver\'ak ((Russian) Uspekhi Mat. Nauk {\bf 58} (2003),  3--44; translation in Russian Math. Surveys {\bf 58} (2003),  211--250), which treated the case $q=3$.  A new local energy bound and a new $\epsilon$-regularity criterion are combined with the backward uniqueness theory of parabolic equations to obtain the  result. A weak-strong uniqueness of  Leray-Hopf weak solutions in  $L_t^{\infty}(L_x^{3,q})$, $q\not=\infty$, is also obtained as a consequence.
\end{abstract}

\maketitle

\section{Introduction}%\label{Introduction}

This paper addresses certain regularity and uniqueness criteria  for the three-dimensional Navier-Stokes equations 
\begin{equation}\label{NSEqu}
\partial_t u -\Delta u + {\rm div}\, u\otimes u  +\nabla p=0, \quad {\rm div}\, u=0,
\end{equation}
where $u=u(x,t)=(u_1(x,t), u_2(x,t), u_3(x,t))\in \RR^3$ and $p=p(x,t)\in \RR$, with $x\in \RR^3$ and $t\geq 0$. The initial condition associated to \eqref{NSEqu} is given by
\begin{equation}\label{IniC}
u(x,0)=a(x), \quad x\in \RR^3.
\end{equation}

Equations \eqref{NSEqu}--\eqref{IniC} describes the motion an incompressible fluid in three spatial dimensions
with unit viscosity and zero external force. Here $u$ and $p$ are referred to as  the fluid velocity and   pressure, respectively. 

From the classical works of Leray \cite{Ler} and Hopf \cite{Hop}, it is known that for any divergence-free vector field $a\in L^2(\RR^3)$
there exists at least one weak solution to the Cauchy problem
\eqref{NSEqu}--\eqref{IniC} in $\RR^3\times (0,\infty)$. Such a solution is now called {\it Leray-Hopf weak solution} whose precise definition will be given next. Let $\dot{C}_0^\infty$ denote the space of all divergence-free infinitely
differentiable vector fields with compact support in $\RR^3$. Let $\dot{J}$  be the closure of  $\dot{C}_0^\infty$
in $L^2(\RR^3)$, and $\dot{J}^{1,2}$ be the closure of the same set with respect to the Dirichlet integral.

\begin{definition} A Leray-Hopf weak solution of the Cauchy problem \eqref{NSEqu}--\eqref{IniC} in $Q_\infty:=\RR^3\times (0, \infty)$ is a vector field 
$u: Q_\infty \rightarrow \RR^3$ such that:

{\rm (i)} $u\in L^\infty(0,\infty; \dot{J})\cap L^2(0,\infty; \dot{J}^{1,2})$;  

{\rm (ii)} The function $t\rightarrow \int_{\RR^3} u(x,t) w(x) dx$ is continuous on $[0,\infty)$ for any $w\in L^2(\RR^3)$;

{\rm (iii)} For any  $w\in \dot{C}_0^\infty(Q_\infty)$ there holds 
$$\int_{Q_\infty} (-u \cdot \partial_t w -u\otimes u:\nabla w +\nabla u:\nabla w) dx dt=0;$$

{\rm (iv)} The energy inequality 
$$ \int_{\RR^3} |u(x,t)|^2 dx + 2 \int_{0}^{t}\int_{\RR^3} |\nabla u|^2 dxds \leq  \int_{\RR^3} |a(x)|^2 dx$$
holds for all $t\in [0, \infty)$, and 
$$\norm{u(\cdot,t)-a(\cdot)}_{L^2(\RR^3)}\rightarrow 0 \quad {\rm as~} t\rightarrow 0^{+}.$$
\end{definition}

% In the case $T=+\infty$ the definition above still makes sense as long as we we replace the closed interval $[0,T]$ by $[0,\infty)$ everywhere.

As of now the problems of uniqueness and regularity of Leray–
Hopf weak solutions are still open. Only some partial results are known. The partial uniqueness result of Prodi \cite{Pro} and Serrin \cite{Ser2}, and the partial smoothness result of  Ladyzhenskaya  \cite{Lad} can be summarized in the following theorem.

\begin{theorem} \label{PSL} Suppose that $a\in \dot{J}$ and $u$, $u_1$ are two Leray-Hopf weak solutions to the Cauchy problem \eqref{NSEqu}--\eqref{IniC}.
If $u\in L^s(0,T; L^p(\RR^3))$ for some $T>0$, where 
$$\frac{3}{p} + \frac{2}{s}=1, \qquad p\in (3, \infty],$$
then $u=u_1$ in $Q_T:=\RR^3\times (0, T)$ and, moreover, $u$ is smooth on $\RR^3\times (0, T]$. 
\end{theorem}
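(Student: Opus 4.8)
The plan is to treat the two assertions separately: uniqueness by a weak--strong energy comparison that exploits the Serrin integrability $u\in L^s(0,T;L^p(\RR^3))$, and smoothness by a parabolic bootstrap based on the same integrability. Throughout I would regard $u$ as the ``strong'' solution (the one lying in the Serrin class) and $u_1$ as an arbitrary Leray--Hopf competitor with the same data $a$, and I would aim to show $w:=u-u_1$ vanishes.

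For the uniqueness part, subtracting the two equations and using ${\rm div}\,u={\rm div}\,u_1=0$ gives the identity $(u\cdot\nabla)u-(u_1\cdot\nabla)u_1=(w\cdot\nabla)u+(u_1\cdot\nabla)w$, so that $w$ formally solves
$$\pa_t w -\Delta w + (w\cdot\nabla)u + (u_1\cdot\nabla)w +\nabla\wt p=0,\qquad {\rm div}\,w=0,\quad w(\cdot,0)=0.$$
Pairing with $w$, the pressure term and the transport term $\int (u_1\cdot\nabla)w\cdot w$ both vanish by incompressibility, leaving
$$\tfrac12\tfrac{d}{dt}\norm{w}_{L^2}^2+\norm{\nabla w}_{L^2}^2=-\int_{\RR^3}(w\cdot\nabla)u\cdot w\,dx=\int_{\RR^3}(w\cdot\nabla)w\cdot u\,dx,$$
where the last equality is integration by parts using ${\rm div}\,w=0$ (this moves the derivative off the low-regularity factor $u$). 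I would then estimate the right side by H\"older with exponents $(p,2,\tfrac{2p}{p-2})$ and the Gagliardo--Nirenberg inequality $\norm{w}_{L^{2p/(p-2)}}\les \norm{w}_{L^2}^{1-3/p}\norm{\nabla w}_{L^2}^{3/p}$, obtaining a bound of the form $C\norm{u}_{L^p}\,\norm{w}_{L^2}^{1-3/p}\,\norm{\nabla w}_{L^2}^{1+3/p}$. The crucial point is that Young's inequality absorbs $\norm{\nabla w}_{L^2}^2$ into the left side and leaves precisely the exponent $\tfrac{2}{1-3/p}=\tfrac{2p}{p-3}=s$ on $\norm{u}_{L^p}$, so that
$$\tfrac{d}{dt}\norm{w}_{L^2}^2\les \norm{u(t)}_{L^p}^{s}\,\norm{w}_{L^2}^2,$$
with $t\mapsto\norm{u(t)}_{L^p}^{s}$ integrable on $(0,T)$ exactly by the Serrin hypothesis. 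Gr\"onwall with $w(0)=0$ then forces $w\equiv0$, i.e. $u=u_1$ on $Q_T$.

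For smoothness I would run a bootstrap on the mild (Duhamel) formulation
$$u(t)=e^{t\Delta}a-\int_0^t e^{(t-\ta)\Delta}\,\mathbb{P}\,{\rm div}(u\otimes u)\,d\ta,$$
where $\mathbb{P}$ is the Leray projection. Since $u\in L^s_tL^p_x$ gives $u\otimes u\in L^{s/2}_tL^{p/2}_x$, the $L^q$-boundedness of $\mathbb{P}$ together with the parabolic smoothing estimates for $e^{t\Delta}$ upgrades $u$ into progressively higher-integrability and higher-differentiability classes on any region away from $t=0$; iterating and differentiating the equation eventually places $u$ in $C^\infty(\RR^3\times(0,T])$. (Equivalently, one can run Serrin's local argument via difference quotients and interior parabolic estimates.) The mechanism is the same at each stage: the Serrin scaling renders the nonlinearity subcritical enough to gain regularity.

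The hard part will be the rigorous justification of the energy identity for $w$, since $u_1$ is only a Leray--Hopf weak solution and cannot simply be multiplied by $w$. The honest route is to combine the energy equality for the strong solution $u$, the energy inequality for $u_1$, and a careful treatment of the cross term $\tfrac{d}{dt}\int_{\RR^3}u\cdot u_1$ obtained by using $u$ itself as an admissible test function in the weak formulation of $u_1$ (and $u_1$, suitably regularized, for $u$); the Serrin regularity of $u$ is precisely what makes it a legitimate test function. Assembling $\norm{w}_{L^2}^2=\norm{u}_{L^2}^2-2\int u\cdot u_1+\norm{u_1}_{L^2}^2$ from these three ingredients is the delicate technical core. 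A secondary care point is the endpoint $p=\infty$ (so $s=2$), where the Gagliardo--Nirenberg step degenerates and one instead bounds the trilinear term directly by $\norm{u}_{L^\infty}\norm{\nabla w}_{L^2}\norm{w}_{L^2}$, still closing the Gr\"onwall argument since $\norm{u}_{L^\infty}^2\in L^1_t$.
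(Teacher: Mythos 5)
The paper does not actually prove this theorem: it is stated as a summary of the classical results of Prodi, Serrin, and Ladyzhenskaya, and the proof is deferred entirely to the cited references \cite{Pro}, \cite{Ser2}, \cite{Lad}. So there is no in-paper argument to compare against; what can be said is that your proposal reproduces the standard classical proof, and it is correct in outline. The uniqueness half --- energy equality for the Serrin-class solution $u$, energy inequality for the Leray--Hopf competitor $u_1$, the cross term $\int u\cdot u_1$ handled by using $u$ (after regularization) as a test function in the weak formulation of $u_1$, and the trilinear term closed by H\"older, Gagliardo--Nirenberg and Young so that exactly $\norm{u(t)}_{L^p}^{s}$ appears as the Gr\"onwall factor --- is precisely the Prodi--Serrin argument, including your separate treatment of the endpoint $p=\infty$, $s=2$. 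Two points deserve to be made explicit. First, the energy \emph{equality} for $u$ (which your scheme needs) holds because the Serrin class intersected with the energy class embeds into $L^4(Q_T)$, which is Lions' sufficient condition; this is worth a line in a complete write-up. Second, the smoothness half is the weaker part of your sketch: to run the Duhamel bootstrap you must first know that the Leray--Hopf solution $u$ satisfies the integral equation, which itself requires justification. The cleaner classical route uses the uniqueness you have already established: for a.e.\ $t_0\in(0,T)$ one has $u(\cdot,t_0)\in \dot{J}^{1,2}$, the local strong solution with this data is smooth and, thanks to the Serrin bound on $u$, cannot blow up before $T$, and weak-strong uniqueness identifies it with $u$ on $(t_0,T]$; letting $t_0\downarrow 0$ gives smoothness on $\RR^3\times(0,T]$. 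Your parenthetical fallback to Serrin's local regularity argument is an acceptable substitute, but as written the bootstrap paragraph is the only place where the argument is not yet self-contained.
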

Here recall that the condition  $u\in L^s(0,T; L^p(\RR^3))$ means that 
$$\norm{u}_{L^s(0,T; L^p(\RR^3))}:=\Big(\int_{0}^{T}\norm{u(\cdot,t)}^s_{L^p(\RR^n)} dt\Big)^{\frac{1}{s}}<+\infty \qquad {\rm if~} s\in[1,\infty),$$
and
$$\norm{u}_{L^s(0,T; L^p(\RR^3))}:=\esssup_{t\in(0,T)}\norm{u(\cdot,t)}_{L^p(\RR^n)}<+\infty \qquad {\rm if~} s=\infty.$$
It is obvious that, when $s=p$, $L^p(0,T; L^p(\RR^3))= L^p(Q_T)$. In general, 
if X is a  Banach space with norm $\norm{\cdot}_{X}$, then
$L^s(a,b; X)$, $a<b$, means the usual Banach space of measurable X-valued  functions $f(t)$ on $(a,b)$ such that the norm
\begin{equation}\label{norms}
\norm{u}_{L^s(a,b; X)}:=\Big(\int_{0}^{T}\norm{f(t)}^s_{X} dt\Big)^{\frac{1}{s}}<+\infty
\end{equation}
for $s\in [1, \infty)$, and with the usual modification of \eqref{norms} in the case $s=\infty$.

The endpoint case $p=3$ and $s=\infty$, which is not covered by Theorem \ref{PSL},  was considered harder and has been settled by Escauriaza-Seregin-{\v S}ver\'ak
in the interesting work \cite{ESS1}:

\begin{theorem}\label{global-ESS} Let $a\in \dot{J}\cap L^3(\RR^3)$.
Suppose that $u$ is a  Leray-Hopf weak solution of the Cauchy problem
\eqref{NSEqu}--\eqref{IniC}, and u satisfies the additional condition 
\begin{equation}\label{PSLcrit}
u\in L^\infty(0,T; L^3(\RR^3))
\end{equation}
for some $T>0$. Then $u\in L^5(Q_T)$ and hence it is unique and smooth on $\RR^3\times(0,T]$.
\end{theorem}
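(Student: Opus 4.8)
The plan is to argue by contradiction along the blow-up and backward-uniqueness scheme of Escauriaza--Seregin--\v Sver\'ak. Suppose $u$ fails to be regular on $\RR^3\times(0,T]$, and let $T_*\le T$ be the first singular time; after a spatial translation we place a singular point at the origin. Here the hypothesis \eqref{PSLcrit} is used at the outset to guarantee that the time slice $u(\cdot,T_*)$ is a genuine element of $L^3(\RR^3)$. First I would verify that $u$ is a \emph{suitable} weak solution, so that the Caffarelli--Kohn--Nirenberg $\epsilon$-regularity theory applies: there is an absolute $\epsilon_0>0$ such that if a scale-invariant quantity like $r^{-2}\int_{Q_r(0,T_*)}|u|^3\,dx\,dt$ falls below $\epsilon_0$ for some small $r$, then $(0,T_*)$ is regular. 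Singularity therefore forces the opposite lower bound at every scale.

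Next I would perform the critical rescaling $u_\lambda(x,t)=\lambda\,u(\lambda x,\,T_*+\lambda^2 t)$, which leaves both the $L^\infty_tL^3_x$ norm and the scale-invariant CKN quantities unchanged. The central technical task is to upgrade the single bound \eqref{PSLcrit} into \emph{local energy estimates uniform in $\lambda$}; granting this, a subsequence of $\{u_\lambda\}$ converges (strongly enough in $L^3_{\rm loc}$ of space-time to transfer the lower bound to the limit) to a bounded ancient mild solution $v$ on $\RR^3\times(-\infty,0)$ with $v\in L^\infty(-\infty,0;L^3)$ that is \emph{nonzero}. I expect this compactness step --- manufacturing a legitimate limiting solution, including a usable limit for the pressure, while preserving nontriviality --- to be the main obstacle, since it is exactly here that the criticality of the $L^3$ scale is exploited.

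The decisive further use of \eqref{PSLcrit} is to show $v(\cdot,0)=0$: for each fixed $R$,
\[
\int_{|x|<R}|v(x,0)|^3\,dx=\lim_{\lambda\to0}\int_{|y|<\lambda R}|u(y,T_*)|^3\,dy=0,
\]
because $u(\cdot,T_*)\in L^3$ and the Lebesgue integral is absolutely continuous over the shrinking balls. By parabolic regularity a bounded ancient solution has $v$ and $\nabla v$ bounded, so the vorticity $\omega=\nabla\times v$ satisfies the differential inequality $|\partial_t\omega-\Delta\omega|\le C(|\omega|+|\nabla\omega|)$, with $\omega(\cdot,0)=0$. Applying a backward-uniqueness theorem for such parabolic inequalities on an exterior domain $\{|x|>R\}\times(-\delta,0)$ (the growth of $\omega$ being harmless since $\omega$ is bounded) yields $\omega\equiv0$ there; spatial unique continuation then propagates this to $\omega(\cdot,t)\equiv0$ for $t\in(-\delta,0)$ on all of $\RR^3$. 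A bounded, divergence- and curl-free field lying in $L^3$ must vanish, so $v(\cdot,t)=0$ on $(-\delta,0)$, and time-analyticity of bounded solutions forces $v\equiv0$, contradicting nontriviality.

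Having ruled out singular points, $u$ is smooth on $\RR^3\times(0,T]$. Interpolating $L^5$ between the controlled $L^\infty_tL^3_x$ norm and the local $L^\infty$ bounds furnished by the regularity, together with the smoothing of the equation near $t=0$, yields $u\in L^5(Q_T)$; since $L^5(Q_T)=L^5(0,T;L^5)$ satisfies $\tfrac{3}{5}+\tfrac{2}{5}=1$ with $p=5\in(3,\infty]$, Theorem \ref{PSL} then delivers uniqueness (and reconfirms smoothness). The deep ingredients are the Carleman-estimate-based backward uniqueness and spatial unique continuation for parabolic operators; the remaining work is the blow-up bookkeeping organized around the absolute-continuity identity above.
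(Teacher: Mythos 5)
Your skeleton is the right one --- it is essentially the blow-up/backward-uniqueness scheme of \cite{ESS1}, which is also the scheme this paper generalizes (the paper recovers the statement as a special case of Theorem \ref{global-ESS-lorentz}, whose proof runs through the local Theorem \ref{localregularity}). But there is a genuine error at the center of your compactness step: you assert that the rescaled solutions converge to a \emph{globally bounded} ancient solution $v$, and you then derive the vorticity inequality and the bounds on $\omega,\nabla\omega$ ``by parabolic regularity of a bounded ancient solution.'' No compactness argument can deliver boundedness of $v$ near $(0,0)$; the lower bound you transfer to the limit is precisely consistent with $(0,0)$ remaining singular for $v$, and assuming boundedness there begs the regularity question you are trying to settle. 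What the argument actually needs --- and what both \cite{ESS1} and this paper supply --- is boundedness of $v$ and its derivatives \emph{only on exterior regions} $(\RR^3\setminus \overline{B}_N(0))\times(-M,0]$, obtained from an $\epsilon$-regularity criterion (here Proposition \ref{epsilon1}) whose smallness input is that $\int_{-M}^0\bigl(\norm{v(\cdot,t)}_{L^{3}(\RR^3\setminus \overline{B}_N)}^{4}+\norm{p_\infty(\cdot,t)}_{L^{3/2}(\RR^3\setminus \overline{B}_N)}^{2}\bigr)dt\to0$ as $N\to\infty$, by dominated convergence from the critical norms of the limit. Fortunately backward uniqueness is applied only on the exterior domain, so the structure survives, but your justification of its hypotheses is unfounded; likewise the interior unique-continuation step cannot lean on global smoothness of $v$ and requires the more delicate argument of \cite[pp. 227--229]{ESS1}. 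A related subtlety: negating CKN $\epsilon$-regularity yields a lower bound on a velocity-\emph{plus-pressure} quantity, which does not pass to the limit (the pressure converges only weakly); one needs the velocity-only lower bound of \cite[Lemma 3.3]{SS}, as in \eqref{singcond}.

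Two further gaps. First, the step you yourself flag as ``the main obstacle'' --- uniform-in-$\lambda$ local energy estimates and a usable pressure limit --- is never carried out, and it is exactly the substantive content of the scheme: the Caccioppoli-type bound of Corollary \ref{ABcontrol2}, the decomposition $p=\tilde p+h$ with Calder\'on--Zygmund and harmonicity estimates, and Proposition \ref{limit-infty}. Moreover, the convergence must be uniform in time, in $C([a,b];L^s_{\rm loc})$, not merely in space-time $L^3_{\rm loc}$: your key identity $v(\cdot,0)=0$ evaluates the limit at the \emph{single} time slice $t=0$, which almost-everywhere-in-time convergence cannot see (this is what Lemma \ref{weaktosuitable} and part (i) of Proposition \ref{limit-infty} are engineered for, including the fact that the $L^3$ bound and weak time-continuity hold for \emph{all} $t$, so that the slice $u(\cdot,T_*)$ is even well defined). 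Second, the conclusion $u\in L^5(Q_T)$ does not follow from ``smoothing near $t=0$'': the sup bounds furnished by regularity degenerate as $t\downarrow 0$. The correct route is to interpolate to get $L^5$ on $\RR^3\times(\delta,T)$ for each $\delta>0$, and to cover $\RR^3\times(0,\delta_0)$ by Kato's local strong solvability in $L^3$ together with weak-strong uniqueness --- this is precisely where the hypothesis $a\in L^3(\RR^3)$ enters, which your sketch never uses for this purpose.
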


We remark that the condition $a\in  L^3(\RR^3)$ in the above theorem is superfluous as it can be deduce from  condition \eqref{PSLcrit}.
A basic consequence of Theorem \ref{global-ESS} is that if a Leray-Hopf weak solution $u$ develops a singularity at a first finite time $t_0>0$
then there necessarily  holds
\begin{equation}\label{L3blowup}
\limsup_{t\uparrow t_0} \norm{u(\cdot,t)}_{L^3(\RR^3)}=+\infty.
\end{equation}
An improvement of this necessary condition of potential blow up can be found in the recent work \cite{Sere}.  See also the papers \cite{GKP, KeKo}  
for another approach to regularity using certain profile decompositions. 

It should be noticed that the uniqueness of  $u$ under condition \eqref{PSLcrit} had been known earlier (see  \cite{KS}).
Moreover, local versions of the corresponding partial regularity results are also available (see \cite{Ser1}, \cite{Stru}, and \cite{ESS1}).
In particular, the local regularity result of \cite{ESS1} reads as follows.

\begin{theorem} \label{localregularity-ESS} Suppose that the pair of functions $(u,p)$ satisfies 
the Navier-Stokes equations \eqref{NSEqu} in $Q_1(0,0)=B_1(0)\times(-1,0)$ in the sense of distributions and has the following properties:
\begin{equation}\label{u-reg}
u\in L^\infty(-1,0; L^2(B_1)) \cap L^2(-1,0; W^{1, 2}(B_1))
\end{equation}
and 
\begin{equation*}%\label{p-assum-32}
p\in L^{3/2}(-1, 0; L^{3/2}(B_{1})).
\end{equation*}
Suppose further that 
\begin{equation*}
u\in L^\infty(-1,0; L^{3}(B_1)).
\end{equation*}
Then the velocity function $u$ is H\"older continuous on $\overline{Q}_{1/2}(0,0)$.
\end{theorem}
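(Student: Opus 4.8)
The plan is to argue by contradiction through a blow-up (parabolic rescaling) procedure, reducing the problem to a Liouville-type statement for bounded ancient solutions that is in turn settled by backward uniqueness and unique continuation for parabolic equations. By parabolic translation it is enough to show that every point $z_0=(x_0,t_0)\in\overline{Q}_{1/2}(0,0)$ is a regular point of $u$. The first ingredient is the Caffarelli--Kohn--Nirenberg type $\epsilon$-regularity theory: there is an absolute constant $\epsilon_0>0$ such that whenever a scale-invariant quantity such as $r^{-2}\int_{Q_r(z_0)}|u|^3\,dx\,dt$ is smaller than $\epsilon_0$ for some admissible radius $r$, the velocity $u$ is H\"older continuous in a neighborhood of $z_0$; this step uses only the energy hypothesis \eqref{u-reg} together with $p\in L^{3/2}$. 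Assuming, toward a contradiction, that $z_0$ is singular, we therefore have $r^{-2}\int_{Q_r(z_0)}|u|^3\,dx\,dt\ge\epsilon_0$ for all small $r$.

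Next I would blow up at $z_0$. Set $u^{\lambda}(y,s)=\lambda\,u(x_0+\lambda y,\,t_0+\lambda^2 s)$ and $p^{\lambda}(y,s)=\lambda^2\,p(x_0+\lambda y,\,t_0+\lambda^2 s)$, which again solve \eqref{NSEqu}. The point is that the spatial $L^3$ norm is invariant under this scaling, so the hypothesis $u\in L^\infty(-1,0;L^3(B_1))$ gives a uniform bound $\|u^{\lambda}(\cdot,s)\|_{L^3}\le M$; combined with the local energy inequality this yields uniform control of $\{u^{\lambda}\}$ on compact parabolic subsets. Passing to a subsequence $\lambda_k\to0$, I obtain a limit $v$ (with associated pressure $\pi$) which is a suitable weak solution of \eqref{NSEqu} on $\RR^3\times(-\infty,0)$ and still satisfies $v\in L^\infty_sL^3_y$. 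Applying the same $\epsilon$-regularity theory to $v$ shows that $v$ and $\nabla v$ are bounded, while the surviving lower bound $r^{-2}\int_{Q_r(0,0)}|v|^3\ge\epsilon_0$ guarantees that $v\not\equiv0$.

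The decisive role of the $L^3$ hypothesis (as opposed to merely $L^2$) appears in the behaviour at the top time $s=0$. For each fixed $R>0$,
$$\int_{B_R}|v(y,0)|^3\,dy=\lim_{k\to\infty}\int_{B_{\lambda_k R}(x_0)}|u(x,t_0)|^3\,dx=0,$$
the last equality by absolute continuity of the integral of the fixed function $|u(\cdot,t_0)|^3\in L^1(B_1)$ over the shrinking balls $B_{\lambda_k R}(x_0)$. Hence $v(\cdot,0)=0$. The final step converts this into triviality of $v$. The vorticity $\omega=\nabla\times v$ obeys $\partial_s\omega-\Delta\omega=\omega\cdot\nabla v-v\cdot\nabla\omega$, so that $|\partial_s\omega-\Delta\omega|\le C\,(|\omega|+|\nabla\omega|)$ with $C$ depending only on $\|v\|_\infty$ and $\|\nabla v\|_\infty$, and $\omega(\cdot,0)=0$. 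Backward uniqueness for the heat operator with such lower-order terms forces $\omega\equiv0$ on a backward slab $\RR^3\times(-\delta,0)$, and spatial unique continuation then propagates $\omega\equiv0$ to all $s<0$; since $\nabla\times v=\mathrm{div}\,v=0$ with $v$ bounded, Liouville's theorem makes $v$ constant in $y$ and the $L^3$ control forces $v\equiv0$, contradicting $v\not\equiv0$.

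I expect the genuinely hard part to be the backward uniqueness together with the unique continuation on the unbounded domain $\RR^3\times(-\delta,0)$: these require Carleman-type estimates adapted to the heat operator with first-order coefficients and appropriate control of the growth of $v$ and $\omega$ at spatial infinity. A secondary technical difficulty is to ensure that the blow-up limit $v$ is a bona fide suitable weak solution with the correct pressure, so that the $\epsilon$-regularity theory is genuinely applicable to it; this needs careful handling of the nonlocal pressure term and of the compactness used to pass to the limit.
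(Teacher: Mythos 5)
Your overall scheme (blow-up at a hypothetical singular point, compactness to a nontrivial ancient suitable weak solution, vanishing of the limit at the final time by absolute continuity of $\int|u(\cdot,t_0)|^3$, then vorticity inequality, backward uniqueness, unique continuation, Liouville) is exactly the architecture of Escauriaza--Seregin--{\v S}ver\'ak \cite{ESS1} that this paper follows and generalizes in Section 4 (proof of Theorem \ref{localregularity} via Proposition \ref{limit-infty}). However, there is one genuine gap, and it sits at the one point where the problem is actually hard: your claim that ``applying the same $\epsilon$-regularity theory to $v$ shows that $v$ and $\nabla v$ are bounded.'' This cannot work as stated. The quantity $r^{-2}\int_{Q_r}|v|^3\,dy\,ds$ is scale invariant, so the hypothesis $v\in L^\infty_s L^3_y$ yields no smallness at any fixed point; if $\epsilon$-regularity could be applied directly to an $L^\infty_t L^3_x$ solution to conclude boundedness, you could apply it to $u$ itself and the whole theorem would follow with no blow-up --- the argument is circular. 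What the $L^3$ (or $L^{3,q}$) control actually yields is smallness only at \emph{spatial infinity}: for a.e.\ $s$ one has $\|v(\cdot,s)\|_{L^3(\RR^3\setminus \overline{B}_R)}\to0$ as $R\to\infty$, and by dominated convergence in time the scale-invariant quantities over $Q_8(z_1)$ fall below the $\epsilon$-regularity threshold once $z_1$ lies outside a large ball; this is precisely how the paper obtains \eqref{up3} and then invokes Proposition \ref{epsilon1}. Hence $v$ and its derivatives are bounded only on an exterior region $(\RR^3\setminus \overline{B}_{2N})\times(-M/2,0]$, not globally.

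This is not cosmetic, because it changes which backward uniqueness theorem is needed. With only exterior bounds, the inequality $|\partial_s\om-\Delta\om|\le C(|\om|+|\nabla\om|)$ and the final-time condition $\om(\cdot,0)=0$ are available on $(\RR^3\setminus\overline{B}_{4N})\times(-M/4,0]$ rather than on a full slab $\RR^3\times(-\delta,0)$; so one must invoke backward uniqueness for parabolic inequalities on an \emph{exterior domain with no boundary condition on the inner boundary} --- precisely the Carleman-estimate theorem of \cite{ESS2} --- and only afterwards use spatial unique continuation (the argument on pages 227--229 of \cite{ESS1}) to propagate $\om\equiv0$ inward, where no a priori bounds on $v$ exist. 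Two smaller items you flagged but should not underestimate: (a) the identity $\int_{B_R}|v(y,0)|^3dy=\lim_k\int_{B_{\lambda_kR}(x_0)}|u(x,t_0)|^3dx$ presupposes that $u(\cdot,t_0)$ is a well-defined $L^3$ function and that the rescaled solutions converge in $C([a,0];L^1_{\rm loc})$ \emph{up to} the final time; the paper secures this through Lemma \ref{weaktosuitable} (the bound \eqref{foralltbound} valid for all times, weak time-continuity) together with the Stokes estimates giving $\partial_t u_k\in L^{4/3}$; (b) passing your lower bound $r^{-2}\int_{Q_r}|u|^3\ge\epsilon_0$ to the limit requires strong local $L^3$ convergence, which does hold by interpolating the $C([a,b];L^2_{\rm loc})$ convergence against uniform $L^4$ bounds, but the paper sidesteps even this by using the $L^2$-based singularity criterion of \cite[Lemma 3.3]{SS}, whose lower bound \eqref{singcond} survives the limit using only the $s=2$ convergence in \eqref{Ccons}.
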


The main goal of this paper is to improve Theorems \ref{global-ESS} and \ref{localregularity-ESS} by means of Lorentz spaces.
Given a measurable set $\Om\subset\RR^3$, recall that the Lorentz space
$L^{p,q}(\Om)$, with  $p\in(0, \infty), q\in(0,\infty]$, is the set of
measurable functions $g$ on $\Omega$ such that the quasinorm $\|g\|_{L^{p, q}(\Omega)}$ is finite. Here we define
\begin{equation*}
\|g\|_{L^{p, q}(\Omega)}:=\left\{ \begin{array}{lcr}
 \displaystyle \left(p \int_{0}^{\infty}\alpha^{q} |\{x\in\Om: |g(x)|>\alpha\}|^{\frac{q}{p}} \frac{d\alpha}{\alpha}
\right)^{\frac{1}{q}}   \text{~if~}  q<\infty, \\
\, \   \displaystyle \sup_{\alpha >0} \,\alpha \, |\{x\in \Om: |g(x)|>\alpha\}|^{\frac{1}{p} }  \text{~if~}  q=\infty.
\end{array}\right.
\end{equation*}

The space $L^{p,\infty}(\Om)$ is often referred to as the Marcinkiewicz or weak $L^p$ space.  It is known that $L^{p,p}(\Om)=L^p(\Om)$ and $L^{p,q_1}(\Om) \subset L^{p,q_2}(\Om)$ whenever $q_1\leq q_2$.
On the other hand, if $|\Om|$ is finite then $L^{p,q}(\Om)\subset L^r(\Om)$ for any $0<q\leq\infty$ and $0<r<p$. Moreover,
$$\norm{g}_{L^{r}(\Om)}\leq |\Om|^{\frac{1}{r}-\frac{1}{p}} \norm{g}_{L^{p,q}(\Om)}.$$

Lorentz spaces can be used to capture logarithmic singularities. For example, in $\RR^3$, for any $\beta>0$ we have 
\begin{equation}\label{loglorentz}
|x|^{-1} | \log (|x|/2)|^{-\beta}  \in L^{3,q}(B_1(0)) \quad {\rm if~and~only~if~} q>\frac{1}{\beta}.
\end{equation}
Note that the inequality in \eqref{loglorentz} is strict.
Of course, in the case $\beta=0$, the function $|x|^{-1}$ belongs to the Marcinkiewicz space $L^{3,\infty}(\RR^3)$.

To the best of our knowledge, a criterion of local regularity for the Navier-Stokes equations in $L^\infty(-1,0; L^{3,\infty}(B_1))$
is still unknown. See \cite{KK, LT} for some partial results, which require a smallness condition. See also \cite{Soh, Tak} for some nonendpoint
related results. The first result of this paper provides  instead a regularity condition in terms of the borderline space 
$L^\infty(-1,0; L^{3,q}(B_1))$ for {\it any} $q\in (0,\infty)$, and thus excluding only the endpoint case $q=\infty$.

\begin{theorem} \label{localregularity} Suppose that the pair of functions $(u,p)$ satisfies 
the Navier-Stokes equations \eqref{NSEqu} in $Q_1(0,0)=B_1(0)\times(-1,0)$ in the sense of distributions such that \eqref{u-reg} holds 
and 
\begin{equation}\label{p-assum}
p\in L^{2}(-1, 0; L^{1}(B_{1})).
\end{equation}
Suppose further that 
\begin{equation}\label{serrinlorentz}
u\in L^\infty(-1,0; L^{3,q}(B_1)) 
\end{equation}
for some  $q\in (3,\infty)$. Then the velocity function $u$ is H\"older continuous on $\overline{Q}_{1/2}(0,0)$.
\end{theorem}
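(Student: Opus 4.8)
The plan is to argue by contradiction, using a blow-up (rescaling) procedure combined with the backward uniqueness theory for parabolic operators, in the spirit of Escauriaza--Seregin--{\v S}ver\'ak, but with the critical space $L^3_x$ replaced throughout by the Lorentz space $L^{3,q}_x$. The decisive structural fact to be exploited is that, since $q<\infty$, the Lorentz quasinorm $\norm{\cdot}_{L^{3,q}}$ is \emph{absolutely continuous}: for a fixed $f\in L^{3,q}(B_1)$ one has $\norm{f}_{L^{3,q}(B_r)}\to0$ as $r\to0$, a property which fails in the endpoint space $L^{3,\infty}$. This is precisely what will force the vanishing of the final-time data of any blow-up limit, and hence drives the whole argument.

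First I would establish the two local tools announced in the abstract. The new $\epsilon$-regularity criterion should read: there is $\epsilon_0>0$ so that if, for a parabolic cylinder $Q_r(z_0)\subset Q_1$, the scale-invariant quantity $\esssup_{t}\norm{u(\cdot,t)}_{L^{3,q}(B_r)}$ together with an appropriately normalized pressure term is smaller than $\epsilon_0$, then $u$ is bounded, hence H\"older continuous, on $Q_{r/2}(z_0)$. To prove it one localizes the equation, uses H\"older's inequality in Lorentz spaces (noting $u\otimes u\in L^{3/2,q/2}_x$ when $u\in L^{3,q}_x$) to control the nonlinear term, and iterates a Caccioppoli/local-energy estimate; the pressure, known only in $L^2(-1,0;L^1(B_1))$ by \eqref{p-assum}, is handled by splitting it into a harmonic part and a part governed by $u\otimes u$ through the local pressure equation. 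The new local energy bound is the uniform estimate, invariant under the natural scaling $u\mapsto r\,u(x_0+r\cdot,t_0+r^2\cdot)$, that this iteration produces; it is what supplies the compactness needed below.

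Next, suppose $u$ fails to be regular at some $z_0=(x_0,t_0)\in\overline{Q}_{1/2}$. Then the scale-invariant Lorentz quantity above stays at least $\epsilon_0$ along a sequence $r_k\downarrow0$. I rescale $u^{(k)}(y,s):=r_k\,u(x_0+r_ky,\,t_0+r_k^2 s)$ and the corresponding pressure; by scale invariance of $\norm{\cdot}_{L^{3,q}}$ together with the local energy bound, the family $\{u^{(k)}\}$ is bounded in $L^\infty_t L^{3,q}_x$ and in the parabolic energy space on every fixed compact set. Using the local energy bound and Aubin--Lions, I extract a subsequence converging to a limit $v$ that is a bounded ancient mild solution of \eqref{NSEqu}, inheriting $v\in L^\infty_tL^{3,q}_x$ and, by the $\epsilon$-regularity criterion, being smooth with $v$ and $\nabla v$ bounded. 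Nontriviality of $v$ follows because the $\epsilon$-regularity fails along the sequence. Crucially, absolute continuity of the Lorentz norm at the fixed slice $t_0$ gives $\norm{u(\cdot,t_0)}_{L^{3,q}(B_{r_k R}(x_0))}\to0$ for each $R$, whence $v(\cdot,0)\equiv0$.

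Finally, with the limit $v$ in hand, I invoke the backward uniqueness machinery: the vorticity $\omega=\nabla\times v$ satisfies $\m{\pa_t\omega-\Delta\omega}\le C(\m{\omega}+\m{\nabla\omega})$ with $C$ controlled by the $L^\infty$ bounds on $v$ and $\nabla v$, and since $\omega(\cdot,0)\equiv0$ the backward uniqueness theorem for such parabolic inequalities, combined with spatial unique continuation, forces $\omega\equiv0$; then $v$ is a bounded, divergence-free, curl-free ancient field, so $v\equiv0$, contradicting its nontriviality. I expect the main obstacle to lie in this compactness step: one must convert the absolute continuity of the Lorentz norm, which is an assertion about a single time slice, into control that survives the blow-up limit \emph{uniformly} near the critical time $t_0$, and must do so while the pressure is available only in the weak class $L^2_tL^1_x$ of \eqref{p-assum}. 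Proving the $\epsilon$-regularity criterion with this weak pressure, and ensuring that $v$ is a genuine bounded mild solution with vanishing final data, is where the bulk of the analytic work will concentrate.
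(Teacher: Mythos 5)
Your overall architecture coincides with the paper's: argue by contradiction, rescale about a putative singular point, pass to an ancient suitable solution, kill its final-time data using absolute continuity of the $L^{3,q}$ quasinorm ($q<\infty$), then conclude with backward uniqueness, unique continuation, and a Liouville argument. The genuine gap lies in the form and the use of your $\epsilon$-regularity criterion, at exactly the point you yourself flag as the main obstacle. You phrase the criterion as smallness of $\esssup_t\norm{u(\cdot,t)}_{L^{3,q}(B_r)}$ plus a pressure term; such a criterion is plausibly true (it essentially follows from the known small-$L^{3,\infty}$ results of \cite{KK, LT}), but its hypothesis cannot be verified where you need it. First, you invoke it to conclude that the limit $v$ is \emph{globally} smooth with $v,\nabla v$ bounded: no smallness holds globally (the $L^\infty_t L^{3,q}_x$ norm of $v$ is merely finite), and indeed the paper only obtains boundedness of $u_\infty$ outside a large ball, which is all that backward uniqueness on exterior domains requires. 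Second, even on exterior regions the absolute continuity of the quasinorm gives only slice-wise decay, $\norm{v(\cdot,t)}_{L^{3,q}(\RR^3\setminus \overline{B}_R)}\to 0$ as $R\to\infty$ for each fixed $t$, with a rate depending on $t$; the essential supremum over $t$ need not become small, so your criterion cannot be applied. This is precisely why the paper's new criterion (Proposition \ref{epsilon1}) involves the \emph{time-integrated}, spatially subcritical quantities $C(u,z_0,8)+D(p,z_0,8)$: these are dominated pointwise in $t$ by the $L^{3,q}$ and $L^{3/2,q/2}$ norms via H\"older on bounded domains, so their smallness on exterior cylinders follows from slice-wise decay by dominated convergence in $t$. (A further preliminary you use tacitly is Lemma \ref{weaktosuitable}, which upgrades \eqref{serrinlorentz} from a.e.\ $t$ to \emph{all} $t$; without it, the statement $u(\cdot,t_0)\in L^{3,q}$ at the singular time, on which your final-time-vanishing step rests, is not even meaningful.)

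Your nontriviality argument for the limit also has a gap. From failure of your criterion at every scale you get $\esssup_t\norm{u^{(k)}(\cdot,t)}_{L^{3,q}(B_1)}+(\text{pressure term})\geq \epsilon_0$ along the sequence, but the convergence available is strong only in $C([a,b];L^s(\om))$ for $s<3$ and weak-$*$ in $L^\infty_t L^{3,q}_x$; a lower bound on a quasinorm does not pass to a weak-$*$ limit (concentration or oscillation can make it drop), and moreover the lower bound could be carried entirely by the pressure term, in which case $v$ could vanish identically. The paper avoids this by quantifying singularity through the Seregin--{\v S}ver\'ak lemma (Lemma 3.3 of \cite{SS}): at a singular point one has \eqref{singcond}, i.e. $A(z_0,\epsilon_k)=\sup_t \epsilon_k^{-1}\int_{B_{\epsilon_k}(x_0)}|u|^2\,dx\geq c_0$, an $L^2$-based scale-invariant quantity. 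The new energy bound (Corollary \ref{ABcontrol2}), combined with Stokes estimates, yields convergence of the rescaled solutions in $C([a,b];L^2(\om))$, under which this lower bound survives the limit and gives \eqref{contradic-goal}; that is what produces the contradiction. So both of the paper's new ingredients --- the energy bound of Corollary \ref{ABcontrol2} and the $C+D$ form of the $\epsilon$-regularity criterion --- are doing work that your proposal, as written, does not replace.
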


It is worth mentioning that even the regularity at  $(0,0)$ is still unknown for solutions $u$ satisfying the pointwise bound
$$|u(x,t)|\leq C\, |x|^{-1}  $$ 
for a.e. $(x,t)\in Q_1(0,0)$. A regularity result under this condition is known only for axially symmetric  solutions (see \cite{SS2} and also \cite{CSTY1, CSTY2}).  On the other hand, in view of \eqref{loglorentz}, Theorem \ref{localregularity} yields the regularity of $u$ under a logarithmic `bump' condition
$$|u(x,t)|\leq C\, |x|^{-1} | \log (|x|/2)|^{-\beta} $$ 
for any $\beta>0$.

In fact, it is possible to obtain regularity under a weaker pointwise bound condition on the solution. In this case 
 Theorem \ref{localregularity} is no longer applicable.
 
\begin{theorem}\label{nearMarc}  Suppose that the pair of functions $(u,p)$ satisfies 
the Navier-Stokes equations \eqref{NSEqu} in $Q_1(0,0)$ in the sense of distributions such that \eqref{u-reg} holds, and 
$$p\in L^{3/2}(-1, 0; L^{1}(B_{1})).$$ 
Suppose further that for a.e. $(x,t)\, \in Q_1(0,0)$, there holds 
\begin{equation}\label{LinftyX}
|u(x,t)|\leq f(t)|x|^{-1} g(x) 
\end{equation}
for nonnegative functions $f\in L^\infty((-1,0))$ and  $g\in L^{\infty}(B_1(0))$ such that $\lim_{x\rightarrow 0} g(x)=0$. Then  $u$ is H\"older continuous on $\overline{Q}_{1/2}(0,0)$.
\end{theorem}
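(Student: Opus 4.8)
The plan is to reduce regularity on $\overline{Q}_{1/2}(0,0)$ to regularity at the points of the time axis and then to apply an $\epsilon$-regularity criterion whose smallness hypothesis is supplied by the decay of $g$. First I would observe that the bound \eqref{LinftyX} degenerates only at $x=0$: for $x_0\neq0$ and $(x,t)$ in a cylinder $\{|x-x_0|<|x_0|/2\}\times(t_0-\rho^2,t_0)$ one has $|x|^{-1}g(x)\le 2\norm{g}_{L^\infty}/|x_0|$, so $u\in L^\infty$ there; a suitable weak solution that is locally bounded is a regular point by the standard local theory (Serrin's criterion), so every $(x_0,t_0)$ with $x_0\neq0$ is regular. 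It therefore suffices to prove, uniformly in $t_0\in[-1/4,0]$, that each axis point $(0,t_0)$ is regular, and then to patch the resulting interior H\"older bounds over a finite cover of the compact set $\overline{Q}_{1/2}(0,0)$.

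Fix $t_0$ and set $\omega(r):=\sup_{|x|\le r}g(x)$, so that $\omega(r)\to0$ by hypothesis. The decisive computation is that the pointwise bound forces the scaled local kinetic energy to be small at small scales, uniformly in $t_0$:
$$\frac{1}{r}\int_{B_r(0)}|u(x,t)|^2\,dx \;\le\; \frac{\norm{f}_{L^\infty}^2\,\omega(r)^2}{r}\int_{B_r(0)}|x|^{-2}\,dx \;=\; 4\pi\,\norm{f}_{L^\infty}^2\,\omega(r)^2,$$
since $\int_{B_r}|x|^{-2}dx=4\pi r$. Here the borderline factor $|x|^{-1}$ is square-integrable against the parabolic scaling, and the extra decay $\omega(r)$ produces the smallness; the same estimate controls $A(r):=\sup_{t_0-r^2<t<t_0}\frac1r\int_{B_r}|u|^2\,dx$. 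One should note that a criterion built on $r^{-2}\int_{Q_r}|u|^3$ alone would fail, because $\int_{B_r}|x|^{-3}dx=\infty$ and $g$ may tend to $0$ arbitrarily slowly; this is also why $|x|^{-1}g$ need not lie in $L^{3,q}(B_1)$ for any finite $q$, so that Theorem \ref{localregularity} cannot be invoked. The argument must instead be anchored on the $L^2$ energy quantity estimated above.

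The main obstacle is the pressure, on which \eqref{LinftyX} gives no information and for which only the weak bound $p\in L^{3/2}(-1,0;L^1(B_1))$ is assumed; moreover the scale-invariant pressure quantity $r^{-7/2}\int_{t_0-r^2}^{t_0}\norm{p(\cdot,t)}_{L^1(B_r)}^{3/2}\,dt$ carries no a priori decay in $r$. I would remove this obstacle with the local pressure decomposition $p=p_{\rm loc}+p_{\rm harm}$ on each $B_r$, where $p_{\rm loc}$ solves $-\Delta p_{\rm loc}=\partial_i\partial_j(\chi\,u_iu_j)$ for a cutoff $\chi$ of $B_r$ and $p_{\rm harm}$ is harmonic in $B_r$. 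Riesz-transform bounds estimate $p_{\rm loc}$ by a quadratic expression in $u$, hence by $\omega(r)^2$, which is small; interior estimates for harmonic functions give the $p_{\rm harm}$-quantity a gain of a positive power of the radius ratio upon restriction to a sub-cylinder. Combined with the local energy inequality available for the suitable weak solution, this produces the new local energy bound controlling $A(r)$ and the dissipation by these small quantities. Reconciling the splitting with the mere $L^1$-in-space and $L^{3/2}$-in-time control of $p$, in the correct parabolic scaling in $t$, is the delicate step, and is where the exponent $3/2$ in the hypothesis is used.

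With both the velocity energy quantity and the pressure quantity shown to be $\le\epsilon_0$ on a fixed small cylinder $Q_{r_0}(0,t_0)$, with $r_0$ independent of $t_0$, the $\epsilon$-regularity criterion yields that $u$ is bounded, and then H\"older continuous, on $Q_{r_0/2}(0,t_0)$, with constants uniform in $t_0\in[-1/4,0]$. Combining these estimates along the time axis with the off-axis regularity of the first step, and covering $\overline{Q}_{1/2}(0,0)$ by finitely many such cylinders, yields the H\"older continuity of $u$ on $\overline{Q}_{1/2}(0,0)$.
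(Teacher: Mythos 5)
Your first two steps coincide with the paper's own proof: away from the axis the bound \eqref{LinftyX} makes $u$ locally bounded, so a singular point can only sit at $x_0=0$; and at an axis point the pointwise bound forces the scaled kinetic energy to vanish as $r\to 0$ (your computation $r^{-1}\int_{B_r(0)}|u|^2dx\le 4\pi\norm{f}_{L^\infty}^2\omega(r)^2$ is the same fact the paper obtains after rescaling, namely $\int_{B_1(0)}|y|^{-2}g(\epsilon_k y)^2dy\to 0$ by dominated convergence). The divergence is in how this smallness is converted into regularity. The paper constructs no pressure smallness at all: it invokes Lemma 3.3 of \cite{SS} (Seregin--{\v S}ver\'ak), which says that at a singular point of a \emph{suitable} weak solution there exist a universal $c_0>0$ and radii $\epsilon_k\to0$ with $A(z_0,\epsilon_k)\ge c_0$, with no smallness hypothesis on the pressure. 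The energy computation then gives an immediate contradiction and the proof is a few lines long. What you propose instead --- the decomposition $p=p_{\rm loc}+p_{\rm harm}$, Calder\'on--Zygmund bounds for the local part, interior estimates for the harmonic part, and an iteration with the local energy inequality showing that smallness of $A$ alone at all small scales propagates into smallness of the scale-invariant velocity and pressure quantities --- is essentially a re-proof of that lemma. The ingredients you name are the right ones (they are the same devices as the paper's Lemmas \ref{boundC1} and \ref{Dbar} and Corollary \ref{ABcontrol2}), so your route is viable, but it buys nothing over the citation and is considerably longer.

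The genuine weakness of your write-up is that you do not carry this step out: you flag ``reconciling the splitting with the mere $L^1$-in-space and $L^{3/2}$-in-time control of $p$'' as the delicate step and leave it there. The difficulty is real, since neither the cubic term nor the pressure term in the energy inequality can be estimated pointwise from \eqref{LinftyX} (as you yourself note, $\int_{B_r}|x|^{-3}dx=\infty$); one must run a true iteration in which smallness of $A$ enters the interpolation estimate for $C_1$ (Lemma \ref{boundC1}), a pressure transfer estimate of the type of Lemma \ref{Dbar}, and the energy inequality (Corollary \ref{ABcontrol2}), and verify that the loop closes for $A$ small relative to a fixed scale ratio. As submitted, that closure is asserted rather than proven, so your argument has a gap precisely at its hardest point; the shortest repair is to replace it by the citation of \cite[Lemma 3.3]{SS} that the paper uses. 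A second, smaller omission: the hypotheses give only a distributional solution with \eqref{u-reg} and $p\in L^{3/2}(-1,0;L^1(B_1))$, whereas both your $\epsilon$-regularity scheme and the cited lemma require $(u,p)$ to be a suitable weak solution with the local energy inequality. The paper secures this first by the argument of Lemma \ref{weaktosuitable} (the bound \eqref{LinftyX} puts $u$ in $L^\infty_t L^{3,\infty}_x$ locally, hence $u\in L^4$, and the pressure decomposition then puts $p$ in $L^{3/2}$ locally); your proposal should do the same before appealing to the energy inequality.
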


On the other hand, Theorem \ref{localregularity} can be used to deduce the following uniqueness and global regularity results, which give an 
improvement of Theorem \ref{global-ESS}.

\begin{theorem}\label{global-ESS-lorentz} Let $a\in \dot{J}$.
Suppose that $u$ is a  Leray-Hopf weak solution of the Cauchy problem
\eqref{NSEqu}--\eqref{IniC}, and it satisfies the additional condition 
\begin{equation}\label{PSLcrit-lorentz}
u\in L^\infty(0,T; L^{3,q}(\RR^3)) 
\end{equation}
for some $q\in (3,\infty)$ and $T>0$.  Then $u$ is smooth  on $\RR^3\times(0,T]$. Moreover, if in addition $a\in L^3(\RR^3)$ then $u\in L^5(Q_T)$ and hence it is unique in $Q_T$ (in the sense of weak-strong uniqueness as in Theorem \ref{PSL}).
\end{theorem}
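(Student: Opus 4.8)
The plan is to derive Theorem~\ref{global-ESS-lorentz} from the local regularity statement of Theorem~\ref{localregularity} by a localization and rescaling argument, exactly as the global endpoint result of \cite{ESS1} is obtained from its local counterpart. First I would observe that a Leray-Hopf weak solution $u$ automatically enjoys the local energy regularity \eqref{u-reg} on every compact parabolic subcylinder of $Q_\infty$: the global bounds $u\in L^\infty(0,\infty;\dot J)\cap L^2(0,\infty;\dot J^{1,2})$ immediately restrict to $u\in L^\infty(t_1,t_2;L^2(B_R))\cap L^2(t_1,t_2;W^{1,2}(B_R))$ for any ball $B_R$ and any $0<t_1<t_2<T$. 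The remaining ingredient needed to invoke Theorem~\ref{localregularity} is a local pressure bound of the form \eqref{p-assum}, namely $p\in L^2(t_1,t_2;L^1(B_R))$ locally. For a suitable associated pressure this follows from the standard theory (the pressure is recovered from $u$ via the singular integral $p=\sum_{i,j}R_iR_j(u_iu_j)$ up to a harmonic-in-$x$ correction), so that $\|p\|_{L^{3/2}_x}\lesssim \|u\|^2_{L^3_x}$; since the hypothesis \eqref{PSLcrit-lorentz} gives $u\in L^\infty(0,T;L^{3,q})$ with $L^{3,q}\subset L^3_{\mathrm{loc}}$ on balls, one controls the local pressure in $L^\infty_t L^{3/2}_x$, which is more than enough for \eqref{p-assum} on bounded time intervals.

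The core step is then to verify the Lorentz-space smallness hypothesis \eqref{serrinlorentz} of Theorem~\ref{localregularity} on parabolic cylinders centered at an arbitrary interior point $(x_0,t_0)\in\RR^3\times(0,T]$. By the global assumption we have $u(\cdot,t)\in L^{3,q}(\RR^3)$ uniformly in $t\le T$, and the Lorentz quasinorm is translation invariant, so $u\in L^\infty(t_0-r^2,t_0;L^{3,q}(B_r(x_0)))$ for all small $r>0$. Rescaling to the unit cylinder $Q_1(0,0)$ by $u_r(y,s)=r\,u(x_0+ry,\,t_0+r^2 s)$ and $p_r(y,s)=r^2 p(x_0+ry,t_0+r^2 s)$ produces a new solution pair on $Q_1(0,0)$; the key point is that the Lorentz norm $\|u\|_{L^{3,q}}$ is invariant under this scaling (the exponent $3$ matches the spatial dimension), so $u_r$ satisfies \eqref{serrinlorentz} with the same bound, and the condition $q\in(3,\infty)$ is preserved. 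Theorem~\ref{localregularity} then yields H\"older continuity of $u_r$ on $\overline Q_{1/2}(0,0)$, hence continuity of $u$ in a neighborhood of $(x_0,t_0)$; since $(x_0,t_0)$ was arbitrary, $u$ is bounded and continuous on $\RR^3\times[\tau,T]$ for every $\tau>0$, and then the standard parabolic bootstrap for the Navier-Stokes equations upgrades this to smoothness on $\RR^3\times(0,T]$.

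For the second assertion, once $u$ is smooth and bounded on $\RR^3\times[\tau,T]$ for each $\tau>0$, I would bridge the initial layer using the extra hypothesis $a\in L^3(\RR^3)$. A bounded solution lies in every $L^p(Q_{[\tau,T]})$, and the precise integrability $u\in L^5(Q_T)$ down to $t=0$ is obtained by invoking the endpoint theorem: the assumption $u\in L^\infty(0,T;L^{3,q})$ with $L^{3,q}\subset L^3_{\mathrm{loc}}$ together with $a\in L^3$ places $u$ in the Escauriaza-Seregin-\v Sver\'ak regime near $t=0$, giving $u\in L^5(\RR^3\times(0,\tau))$, which patches with the bounded high-regularity part to yield $u\in L^5(Q_T)$ globally. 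The membership $u\in L^5(Q_T)\subset L^s(0,T;L^p)$ with $\tfrac 3p+\tfrac 2s=1$, $p>3$, finally places $u$ in the Prodi-Serrin-Ladyzhenskaya class of Theorem~\ref{PSL}, which delivers the weak-strong uniqueness in $Q_T$.

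The main obstacle I anticipate is the scaling/localization verification in the second paragraph: one must be careful that the \emph{local} pressure produced by restricting the global solution, and its rescaling $p_r$, genuinely satisfies the correct local bound \eqref{p-assum} on the unit cylinder. Because the pressure is only determined up to an additive function of $t$ (and globally is reconstructed nonlocally from $u$), matching the hypotheses of Theorem~\ref{localregularity} requires choosing a good local representative of $p$ — typically by splitting $p=p_1+p_2$ with $p_1$ the Newtonian-potential part controlled by $\|u\|^2_{L^3(B_{2r})}$ and $p_2$ harmonic in space, whose local $L^\infty$ bound is controlled by averaged quantities on a slightly larger ball — and checking that this decomposition rescales compatibly. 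Once the local pressure bound is secured, the rest is routine covering and bootstrapping.
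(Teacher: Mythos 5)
Your first paragraph (local energy class plus the Calder\'on--Zygmund pressure bound $p\in L^\infty_t L^{3/2,q/2}_x$, giving \eqref{p-assum} locally) and your treatment of interior points by rescaling and Theorem \ref{localregularity} are sound and match the paper. The genuine gap is the jump from ``Theorem \ref{localregularity} applies at every interior point'' to ``$u$ is \emph{bounded} and continuous on $\RR^3\times[\tau,T]$.'' Theorem \ref{localregularity} is purely qualitative: it asserts H\"older continuity of $u$ on $\overline{Q}_{1/2}$ but provides no bound on $\sup_{\overline{Q}_{1/2}}|u|$ in terms of the norms appearing in its hypotheses (its proof is by contradiction and compactness, so no such constant can be read off). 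Pointwise application therefore yields continuity, hence boundedness on \emph{compact} subsets, but not a uniform bound on the spatially unbounded slab $\RR^3\times[\tau,T]$ --- and your route to $L^5$ needs exactly that uniformity. This is precisely why the paper proves the quantitative $\epsilon$-regularity criterion, Proposition \ref{epsilon1}: for $q<\infty$ the tail $\norm{u(\cdot,t)}_{L^{3,q}(\RR^3\setminus \overline{B}_R(0))}$ vanishes as $R\to\infty$ (absolute continuity of the quasinorm, which fails for $q=\infty$), so $C(u,z_0,8r_0)+D(p,z_0,8r_0)\leq \epsilon_1$ for all $z_0$ outside a large ball $B_R(0)$, and Proposition \ref{epsilon1} then gives the uniform bound $|u|\leq C(\delta)$ near spatial infinity; the compact part $\overline{B}_R(0)\times[\delta,T]$ is handled, as you do, by Theorem \ref{localregularity} plus compactness. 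Relatedly, your claim that ``a bounded solution lies in every $L^p(Q_{[\tau,T]})$'' is false on a domain of infinite measure; the paper instead derives $u\in L^4(Q_T)$ from the interpolation $\norm{u}_{L^4}\leq C\norm{u}_{L^{3,q}}^{1/2}\norm{u}_{L^6}^{1/2}$ together with the energy bound, and then interpolates $L^4\cap L^\infty$ on $\RR^3\times(\delta,T)$ to reach $L^5$.

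The bridge across the initial layer is also incorrect as written. You invoke the endpoint Theorem \ref{global-ESS} via the inclusion ``$L^{3,q}\subset L^3_{\mathrm{loc}}$,'' but for $q>3$ the inclusions run the other way: $L^3=L^{3,3}\subsetneq L^{3,q}$, and on bounded sets $L^{3,q}(\Omega)\subset L^r(\Omega)$ only for $r<3$; by \eqref{loglorentz}, $|x|^{-1}|\log(|x|/2)|^{-\beta}$ with $1/q<\beta\leq 1/3$ lies in $L^{3,q}(B_1)$ but not in $L^3(B_1)$. So the hypothesis $u\in L^\infty(0,T;L^3(\RR^3))$ of Theorem \ref{global-ESS} is simply not available --- indeed the whole point of Theorem \ref{global-ESS-lorentz} is that $L^{3,q}$ is strictly larger than $L^3$. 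The correct bridge, and the one the paper uses, is local-in-time strong solvability for divergence-free data $a\in \dot{J}\cap L^3(\RR^3)$ (Kato; see also Theorem 7.4 of \cite{ESS1}), which produces a strong solution belonging to $L^5(\RR^3\times(0,\delta_0))$ for some $\delta_0>0$, identified with $u$ on $(0,\delta_0)$ by weak-strong uniqueness; patching this with $u\in L^5(\RR^3\times(\delta,T))$ for all $\delta>0$ gives $u\in L^5(Q_T)$, and Theorem \ref{PSL} then yields uniqueness.
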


Theorem \ref{global-ESS-lorentz} implies that the necessary condition of potential blow up \eqref{L3blowup} can now be improved  by replacing the $L^3$ norm
with any smaller $L^{3, q}$ quasi-norm provided $q\not=\infty$. We should mention that this kind of potential blow up criterion has recently been 
extended in \cite{GKP2} to the norms of Besov spaces $\dot{B}_{q, p}^{-1+3/p}(\RR^3)$, $3<p,q<\infty$, using   profile decompositions in  the framework of ``strong" solutions. See also \cite{CP} for an earlier related result. In such a setting of strong solutions,  the blow up criterion of \cite{GKP2}
 is more general than ours since  for $q>3$,
$$L^{3,q}(\RR^3)\subset \dot{B}_{q, q}^{-1+3/q}(\RR^3).$$

However, our blow up criterion  here is obtained for  Leray-Hopf weak solutions.
Moreover, using instead the local regularity criterion, Theorem \ref{localregularity}, we see that if $(x_0,t_0)$ is a singular 
point of a Leray-Hopf weak solution $u$ then there holds
$$\limsup_{t\uparrow t_0} \norm{u(\cdot,t)}_{L^{3,q}(B_{\delta}(x_0))}=+\infty$$
for any $\delta>0$ and $q\not=\infty$. Note that for a Leray-Hopf weak solution $u$, the associated pressure $p$ can be chosen so that 
$p\in L^2(0,\infty; L^{3/2}(\RR^3))$ since $|u|^2$ belongs to the same space (see, e.g., \cite{SS}).

Our approach to Theorems \ref{localregularity} and \ref{global-ESS-lorentz} is  influenced by the above mentioned work of Escauriaza-Seregin-{\v S}ver\'ak 
\cite{ESS1}, which reduces the regularity matter to the  backward uniqueness problem for parabolic equations
with variable lower-order terms. A key  ingredient, which makes our results stronger than that
of \cite{ESS1}, is   a new  $\epsilon$-regularity criterion for {\it suitable weak solutions}  to the Navier-Stokes equations (see Proposition \ref{epsilon1}).  
See Definition \ref{SWS} below for the notion of suitable weak solutions.
In turn, this kind of $\epsilon$-regularity criterion is a consequence of a new bound for some scaling invariant energy quantities (see Corollary \ref{ABcontrol2}). Moreover, this new energy bound is also essential in a blow-up procedure needed in the proof of Theorem \ref{localregularity}.
It provides a certain compactness result and thus  yields a  non-trivial `ancient solution' (see Proposition \ref{limit-infty}), another important 
ingredient in the proof of Theorem \ref{localregularity}.

On the other hand, the proof of Theorem \ref{nearMarc} is simple. It requires only an $\epsilon$-regularity criterion of Seregin and {\v S}ver\'ak in \cite[Lemma 3.3]{SS}.

\section{ Preliminaries and local energy estimates}

Throughout the paper we use the following notations for balls and parabolic cylinders:
$$B_r(x)=\{y\in \RR^3: |x-y|<r\}, \quad x\in\RR^3, \, r>0,$$
and
$$Q_r(z)=B_r(x)\times (t-r^2, t) \quad {\rm with~} z=(x,t).$$

The following scaling invariant  quantities will be employed:
\begin{eqnarray*}
A(z_0, r)=A(u, z_0, r)&=& \sup_{t_0-r^2\leq t\leq t_0} r^{-1}\int_{B_r(x_0)} |u(x,t)|^2 dx,\\
B(z_0, r)=B(u, z_0, r)&=& r^{-1}\int_{Q_r(x_0)} |\nabla u(x,t)|^2 dxdt,\\
C(z_0, r)=C(u, z_0, r)&=& r^{-3}\int_{t_0-r^2}^{t_0} \norm{u}_{L^{\frac{12}{5}}(B_r(x_0))}^4 dt,\\
C_1(z_0, r)=C_1(u, z_0, r)&=& r^{-2}\int_{t_0-r^2}^{t_0} \norm{u}_{L^{3}(B_r(x_0))}^3 dt,\\
D(z_0, r)=D(p,z_0, r)&=& r^{-3}\int_{t_0-r^2}^{t_0} \norm{p}_{L^{\frac{6}{5}}(B_r(x_0))}^2 dt,\\
D_1(z_0, r)=D_1(p, z_0, r)&=& r^{-2}\int_{t_0-r^2}^{t_0} \norm{p}_{L^{\frac{3}{2}}(B_r(x_0))}^{3/2} dt.\\
\end{eqnarray*}

To analyze local properties of solutions, it is often useful to use the notion of suitable weak solutions.  Such a notion of weak solutions was introduced in Caffarelli-Kohn-Nirenberg \cite{CKN} following the work of Scheffer \cite{Sche1}--\cite{Sche4}. Here we use the version introduced by Lin in \cite{Lin}.

\begin{definition}\label{SWS} Let $\om$ be an open set in $\RR^3$ and let $-\infty<a< b< \infty$. We say that a pair $(u, p)$ is a suitable weak solution 
to the Navier-Stokes equations in $Q=\om\times (a, b)$ if the following conditions hold:

{\rm (i)}   $u\in L^\infty(a, b; L^2(\om))\cap L^2(a, b; W^{1,\, 2}(\om)) {\rm ~and~} p\in L^{3/2}(\om\times (a, b));$

{\rm (ii)} $(u,p)$  satisfies the Navier-Stokes equations in the sense of distributions. That is, 

$$\int_{a}^{b}\int_{\om} \left\{-u \, \psi_{t} + \nabla u : \nabla \psi - (u\otimes u):\nabla \psi -  p \, {\rm div}\, \psi \right\} dxdt =0$$ 
for all vector fields $\psi\in C_0^{\infty}(\om\times(a,b); \RR^3)$, and
$$\int_{\om\times \{t\}} u(x,t)\cdot \nabla \phi(x) \, dx=0$$
for a.e. $t\in (a, b)$ and  all real valued functions $\phi\in C_0^{\infty}(\om)$;

{\rm (iii)} $(u,p)$  satisfies the local generalized energy inequality
\begin{eqnarray*}
\lefteqn{ \int_{\om}|u(x, t)|^2 \phi(x,t) dx + 2\int_{a}^t\int_{\om} |\nabla u|^2 \phi(x, s) dxds} \\
&\leq& \int_{a}^t\int_{\om} |u|^2 (\phi_t +\Delta \phi) dx ds + \int_{a}^t\int_{\om}(|u|^2 + 2p)u\cdot \nabla \phi dx ds.
\end{eqnarray*}
for a.e. $t\in (a, b)$ and any nonnegative function $\phi \in C_0^{\infty}(\RR^3\times\RR)$ vanishing in a neighborhood of the parabolic boundary 
$\partial'Q=\om\times\{t=a\} \cup \partial\om\times [a, b]$.
\end{definition}

A proof of the following lemma can be found in \cite[Lemma 6.1]{Giu}.

\begin{lemma}\label{Giusti-lem}
Let $I(s)$ be a bounded nonnegative function in the interval $[R_1, R_2]$. Assume that for every $s, \rho\in [R_1, R_2]$ and  $s<\rho$ we have 
$$I(s)\leq [A(\rho-s)^{-\alpha} +B(\rho-s)^{-\beta} +C] +\theta I(\rho)$$
with  $A, B, C\geq 0$, $\alpha>\beta>0$ and $\theta\in [0,1)$. Then there holds
$$I(R_1)\leq c(\alpha, \theta) [A(R_2-R_1)^{-\alpha} +B(R_2-R_1)^{-\beta} +C].$$

\end{lemma}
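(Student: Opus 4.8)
The plan is to prove this by the classical iteration (``hole-filling'') argument: apply the hypothesis along a geometric sequence of radii that fills the gap between $R_1$ and $R_2$, iterate the resulting recursion, and sum a convergent geometric series, using the boundedness of $I$ to discard the tail. First I would dispose of the trivial case $\theta=0$, in which applying the hypothesis once with $s=R_1$ and $\rho=R_2$ already yields the conclusion with $c=1$. So I may assume $\theta\in(0,1)$.

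Next, I would fix a parameter $\tau\in(0,1)$ to be chosen, and define radii $\rho_0=R_1$ and $\rho_{i+1}=\rho_i+(1-\tau)\tau^i(R_2-R_1)$, so that $\rho_i=R_1+(1-\tau^i)(R_2-R_1)\uparrow R_2$ and $\rho_{i+1}-\rho_i=(1-\tau)\tau^i(R_2-R_1)$. Applying the hypothesis with $s=\rho_i$ and $\rho=\rho_{i+1}$ gives
$$I(\rho_i)\leq \theta\, I(\rho_{i+1}) + A[(1-\tau)\tau^i(R_2-R_1)]^{-\alpha} + B[(1-\tau)\tau^i(R_2-R_1)]^{-\beta} + C.$$
Iterating this inequality $k$ times, I obtain
$$I(R_1)\leq \theta^k I(\rho_k) + \sum_{i=0}^{k-1}\theta^i\Big\{A[(1-\tau)\tau^i(R_2-R_1)]^{-\alpha} + B[(1-\tau)\tau^i(R_2-R_1)]^{-\beta} + C\Big\}.$$

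Since $I$ is bounded and $\theta\in(0,1)$, the remainder $\theta^k I(\rho_k)\to 0$ as $k\to\infty$, so passing to the limit leaves three infinite series. These are geometric: the $C$-series sums to $C/(1-\theta)$, while the $A$- and $B$-series have ratios $\theta\tau^{-\alpha}$ and $\theta\tau^{-\beta}$ respectively. The crucial step is to choose $\tau$ so that these converge; taking $\tau=\big(\tfrac{1+\theta}{2}\big)^{1/\alpha}$ gives $\tau^\alpha=\tfrac{1+\theta}{2}>\theta$ and $\tau<1$, hence $\theta\tau^{-\alpha}=\tfrac{2\theta}{1+\theta}<1$, and since $\beta<\alpha$ and $\tau<1$ one also has $\theta\tau^{-\beta}\leq\theta\tau^{-\alpha}<1$. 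Summing then yields
$$I(R_1)\leq \frac{(1-\tau)^{-\alpha}}{1-\theta\tau^{-\alpha}}\,A(R_2-R_1)^{-\alpha} + \frac{(1-\tau)^{-\beta}}{1-\theta\tau^{-\beta}}\,B(R_2-R_1)^{-\beta} + \frac{C}{1-\theta}.$$
Since $\tau$ depends only on $\alpha$ and $\theta$, and since $\beta<\alpha$ with $\tau<1$ forces the $B$-coefficient to be dominated by the $A$-coefficient, all three constants are bounded by a single $c(\alpha,\theta)$, which gives the claim.

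The main obstacle is the calibration of $\tau$: it must be taken strictly between $\theta^{1/\alpha}$ and $1$ so as to force $\theta\tau^{-\alpha}<1$, and this single choice must simultaneously control the $\beta$-series (which is automatic from $\beta<\alpha$) while keeping the gap factors $(1-\tau)^{-\alpha}$ finite. The use of the boundedness hypothesis on $I$ is subtle but essential — it is needed only to discard the tail $\theta^k I(\rho_k)$, yet without it the limit passage fails.
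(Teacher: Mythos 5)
Your proof is correct and is essentially the standard argument: the paper gives no proof of this lemma, citing instead Lemma 6.1 of Giusti's book, whose proof is exactly this iteration along the geometric radii $\rho_i=R_1+(1-\tau^i)(R_2-R_1)$ with $\tau$ calibrated so that $\theta\tau^{-\alpha}<1$. Your choice $\tau=\bigl(\tfrac{1+\theta}{2}\bigr)^{1/\alpha}$, the domination of the $\beta$-series by the $\alpha$-series, and the use of boundedness of $I$ only to kill the tail $\theta^k I(\rho_k)$ all match that argument and yield the constant $c(\alpha,\theta)$ as claimed.
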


In the next lemma $L^{-1, \, 2}(B_r(x_0))$ stands for the dual  of the Sobolev space $W^{1, \, 2}_0(B_r(x_0))$. The latter is defined as the completion of $C_0^{\infty}(B_r(x_0))$ under the Dirichlet norm 
$$\norm{\varphi}_{W^{1, \, 2}_0(B_r(x_0))}=\Big(\int_{B_r(x_0)} |\nabla \varphi|^2 dx \Big)^{1/2}.$$

\begin{lemma}\label{ABcontrol0}  Suppose that $(u,p)$ is a suitable weak solution 
to the Navier-Stokes equations in $Q=\om\times (a, b)$. Let $z_0=(x_0, t_0)$ and $r>0$ be such that $Q_r(z_0)\subset Q$. Then there holds 
\begin{eqnarray*}
A(z_0, r/2) + B(z_0, r/2)&\leq& C \left[r^{-3} \int_{t_0-r^2}^{t_0} \norm{|u|^2}^2_{L^{-1, \, 2}(B_r(x_0))} dt\right]^{1/2}\nonumber\\
&& + \, C r^{-3} \int_{t_0-r^2}^{t_0} \norm{|u|^2 +2p}^2_{L^{-1, \, 2}(B_r(x_0))} dt.\nonumber
\end{eqnarray*}
\end{lemma}
\begin{proof}
For $z_0=(x_0, t_0)$ and $r>0$ such that $Q_r(z_0)\subset Q$, we consider the cylinders $$Q_s(z_0)=B_s(x_0)\times (t_0-s^2, t_0)\subset Q_\rho(z_0)=B_\rho(x_0)\times (t_0-\rho^2, t_0),$$
where $r/2\leq s<\rho\leq r$.

Let $\phi(x,t)=\eta_1(x)\eta_2(t)$ where $\eta_1\in  C_0^{\infty}(B_\rho(x_0))$, $0\leq \eta_1\leq 1$ in $\RR^n$, $\eta_1\equiv 1$ on $B_s(x_0)$, and $$|\nabla^{\alpha} \eta_1|\leq \frac{c}{(\rho-s)^{|\alpha|}}$$ for all multi-indices $\alpha$ with $|\alpha|\leq 3$. The function $\eta_2(t)$ is chosen so that 
$\eta_2\in C_0^{\infty}(t_0-\rho^2, t_0+\rho^2)$, $0\leq\eta_2\leq 1$ in $\RR$, $\eta_2(t)\equiv 1$ for $t\in [t_0-s^2, t_0+s^2]$, and $$|\eta'_2(t)|\leq \frac{c}{\rho^2-s^2}\leq \frac{c}{r(\rho-s)}.$$

Then
$$|\nabla \phi_t|\leq \frac{c}{r(\rho-s)^2}\leq \frac{c}{(\rho-s)^3},\quad |\nabla \Delta \phi|\leq  \frac{c}{(\rho-s)^3},$$
$$| \nabla^2 \phi|\leq  \frac{c}{(\rho-s)^2},\quad |\nabla  \phi|\leq  \frac{c}{\rho-s}.$$

We next define
$$I(s)=I_1(s) +I_2(s),$$
where
$$I_1(s)=\sup_{t_0-s^2\leq t\leq t_0}\int_{B_s(x_0)}|u(x, t)|^2 dx=s\, A(z_0,s)$$
and 
$$I_2(s)=\int_{t_0-s^2}^{t_0}\int_{B_s(x_0)}|\nabla u(x, t)|^2 dx dt=s\, B(z_0,s).$$

Using $\phi$ as a test function in the generalized energy inequality we find
\begin{eqnarray}\label{Isenergy}
\lefteqn{I(s)}\\
&\leq& \int_{t_0-\rho^2}^{t_0} \norm{|u|^2}_{L^{-1, \, 2}(B_\rho(x_0))}\norm{\nabla \phi_t +\nabla \Delta \phi}_{L^2(B_{\rho}(x_0))} dt \nonumber\\
&& + \int_{t_0-\rho^2}^{t_0} \Big\{\norm{|u|^2 + 2p}_{L^{-1, \, 2}(B_\rho(x_0))}\times\nonumber\\
&& \qquad \qquad\qquad\times\norm{\nabla u\cdot \nabla \phi + u\cdot \nabla^2\phi}_{L^2(B_{\rho}(x_0))}\Big\} dt\nonumber \\
&=:& J_1+ J_2.\nonumber
\end{eqnarray}

By the choice of test function we have 
\begin{eqnarray}\label{firstterm}
J_1&\leq& C \frac{\rho^{3/2}}{(\rho-s)^3} \int_{t_0-\rho^2}^{t_0} \norm{|u|^2}_{L^{-1, \, 2}(B_\rho(x_0))} dt\\
&\leq& C \frac{\rho^{5/2}}{(\rho-s)^3} \left[ \int_{t_0-\rho^2}^{t_0} \norm{|u|^2}^2_{L^{-1, \, 2}(B_\rho(x_0))} dt\right]^{1/2}.\nonumber
\end{eqnarray}

Also,
\begin{eqnarray*}
J_2 &\leq& C \int_{t_0-\rho^2}^{t_0} \Big\{ \norm{|u|^2+2p}_{L^{-1, \, 2}(B_\rho(x_0))} \times\\
 && \qquad \qquad\qquad \times \Big[\frac{\norm{\nabla u}_{L^2(B_\rho(x_0))}}{\rho-s} +
\frac{\norm{u}_{L^2(B_\rho(x_0))}}{(\rho-s)^2}\Big] \Big\}dt,
\end{eqnarray*}
and thus by H\"older's inequality we get  
\begin{eqnarray}\label{secondterm}
J_2 &\leq& \frac{C}{\rho-s} \left[\int_{t_0-\rho^2}^{t_0} \norm{|u|^2 +2p}^2_{L^{-1, \, 2}(B_\rho(x_0))} dt\right]^{1/2} I_{2}(\rho)^{1/2} \\
&&+ \frac{C \rho}{(\rho-s)^2} \left[\int_{t_0-\rho^2}^{t_0} \norm{|u|^2 +2p}^2_{L^{-1, \, 2}(B_\rho(x_0))} dt\right]^{1/2} I_{1}(\rho)^{1/2}.\nonumber
\end{eqnarray}

Combining inequalities \eqref{Isenergy}--\eqref{secondterm} and using $\rho\leq r$ we arrive at
\begin{eqnarray*}
I(s) &\leq&  \frac{C r^{5/2}}{(\rho-s)^3} \left[ \int_{t_0-\rho^2}^{t_0} \norm{|u|^2}^2_{L^{-1, \, 2}(B_\rho(x_0))} dt\right]^{1/2}+\\
&& +\frac{C}{\rho-s} \left[\int_{t_0-\rho^2}^{t_0} \norm{|u|^2 +2p}^2_{L^{-1, \, 2}(B_\rho(x_0))} dt\right]^{1/2} I_{2}(\rho)^{1/2} +\\
&&+ \frac{C r}{(\rho-s)^2} \left[\int_{t_0-\rho^2}^{t_0} \norm{|u|^2 +2p}^2_{L^{-1, \, 2}(B_\rho(x_0))} dt\right]^{1/2} I_{1}(\rho)^{1/2}.
\end{eqnarray*}

By Young's inequality this yields 
\begin{eqnarray*}
I(s) &\leq&  \frac{C r^{5/2}}{(\rho-s)^3} \left[ \int_{t_0-\rho^2}^{t_0} \norm{|u|^2}^2_{L^{-1, \, 2}(B_\rho(x_0))} dt\right]^{1/2}\\
&& +\left\{\frac{C}{(\rho-s)^2} +\frac{C r^2}{(\rho-s)^4} \right\}\int_{t_0-\rho^2}^{t_0} \norm{|u|^2+2p}^2_{L^{-1, \, 2}(B_\rho(x_0))} dt \\
&& + \frac{1}{2}I(\rho),
\end{eqnarray*}
which implies in particular that 
\begin{eqnarray*}
I(s) &\leq&  \frac{C r^{5/2}}{(\rho-s)^3} \left[ \int_{t_0-r^2}^{t_0} \norm{|u|^2}^2_{L^{-1, \, 2}(B_r(x_0))} dt\right]^{1/2}\\
&& +\frac{C r^2}{(\rho-s)^4} \int_{t_0-r^2}^{t_0} \norm{|u|^2+2p}^2_{L^{-1, \, 2}(B_r(x_0))} dt + \frac{1}{2}I(\rho).
\end{eqnarray*}

Since this holds for all $r/2\leq s<\rho\leq r$ by Lemma \ref{Giusti-lem} we find 
\begin{eqnarray*}
I(r/2) &\leq& C r^{-1/2} \left[ \int_{t_0-r^2}^{t_0} \norm{|u|^2}^2_{L^{-1, \, 2}(B_r(x_0))} dt\right]^{1/2}\\
&& + \, C r^{-2} \int_{t_0-r^2}^{t_0} \norm{|u|^2 +2p}^2_{L^{-1, \, 2}(B_r(x_0))} dt.
\end{eqnarray*}

Thus 
\begin{eqnarray*}
A(z_0, r/2) + B(z_0, r/2)&\leq& C \left[r^{-3} \int_{t_0-r^2}^{t_0} \norm{|u|^2}^2_{L^{-1, \, 2}(B_r(x_0))} dt\right]^{1/2}\nonumber\\
&& + \, C r^{-3} \int_{t_0-r^2}^{t_0} \norm{|u|^2 +2p}^2_{L^{-1, \, 2}(B_r(x_0))} dt
\end{eqnarray*}
as desired.
\end{proof}

Note that for $f\in L^{6/5}(B_r(x_0))$ and for $\varphi\in C_0^{\infty}(B_r(x_0))$ we have 
\begin{eqnarray*}
\left |\int_{B_r(x_0)} \varphi(x) f(x)dx \right | &\leq& C \int_{B_r(x_0)} \left[\int_{B_r(x_0)}\frac{|\nabla \varphi(y)|}{|x-y|^{2}} dy\right]
|f(x)| dx\\
&=& C  \int_{B_r(x_0)}|\nabla \varphi(y)| \left[\int_{B_r(x_0)}\frac{ |f(x)|dx}{|x-y|^{2}}\right]
dy\\
&\leq& C \norm{\nabla \varphi}_{L^2(B_r(x_0))} \norm{ {\rm\bf I}_1(\chi_{B_r(x_0)}|f|)}_{L^2(B_r(x_0))}.
\end{eqnarray*}
Here ${\rm\bf I}_1$ is the first order Riesz's potential defined by 
$${\rm\bf I}_1(\mu)(x)= c \int_{\RR^3}\frac{d\mu(y)}{|x-y|^{2}}, \qquad x\in \RR^3, $$ 
for a nonnegative locally finite measure $\mu$ in $\RR^3$.
Thus we find 
\begin{equation}\label{dualnorm}
\norm{f}_{L^{-1,\, 2}(B_r(x_0))} \leq C \norm{ {\rm\bf I}_1(\chi_{B_r(x_0)}|f|)}_{L^2(B_r(x_0))}\leq C\norm{f}_{L^{\frac{6}{5}}(B_r(x_0))}
\end{equation}
by the embedding property of Riesz's potentials. 

Using  \eqref{dualnorm} we obtain the following important consequence  of Lemma \ref{ABcontrol0}.

\begin{corollary} \label{ABcontrol2} Suppose that $(u,p)$ is a suitable weak solution 
to the Navier-Stokes equations in $Q=\om\times (a, b)$. Let $z_0=(x_0, t_0)$ and $r>0$ be such that $Q_r(z_0)\subset Q$. Then there holds 
\begin{equation*}
A(z_0, r/2) + B(z_0, r/2)\leq C[ C(z_0, r)^{1/2} +  C(z_0, r) +  D(z_0, r)].
\end{equation*}
\end{corollary}

\section{$\epsilon$-regularity criteria}
As demonstrated in \cite{ESS1}, the proof of Theorem \ref{localregularity-ESS} above relies heavily on the following $\epsilon$-regularity criterion for suitable weak solutions to the Navier-Stokes equations (see \cite[Lemma 2.2]{ESS1}, see also \cite{CKN, LS, NRS}).
 
\begin{proposition}\label{C1D1ep0}
There exist positive constants $\epsilon_0$ and $C_k$, $k=0, 1, 2,  \dots$, such that the following holds. Suppose that the pair
$(u,p)$ is a suitable solution to the Navier-Stokes equations in $Q_1(z_0)$ and satisfies the smallness condition
\begin{equation*}
C_1(u,z_0,1) + D_1(p,z_0,1) \leq \epsilon_0.
\end{equation*}
Then $\nabla^{k} u$ is H\"older continuous on $\overline{Q}_{1/2}(z_0)$ for any  integer $k\geq 0$, and 
\begin{equation*}
\max_{z\in \overline{Q}_{1/2}(z_0)} |\nabla^{k} u(z)| \leq C_k.
\end{equation*}
\end{proposition}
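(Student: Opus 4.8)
The plan is to prove Proposition \ref{C1D1ep0} by a Caffarelli--Kohn--Nirenberg-type iteration on the scaling invariant quantities, closed off by the energy bound of Corollary \ref{ABcontrol2}, and then to bootstrap the resulting boundedness of $u$ up to smoothness of all its derivatives by standard parabolic theory. After a translation and a parabolic rescaling (which leave all of $A,B,C,C_1,D,D_1$ invariant) I may take $z_0=(0,0)$. The core is a single decay step: I want absolute constants $\theta\in(0,1/2)$ and $\gamma,\delta>0$, and a threshold $\epsilon_0>0$, so that with $E(r):=C_1(z_0,r)+D_1(z_0,r)$ one has
$$E(\theta r)\les \theta^{\gamma}E(r)+\theta^{-\delta}E(r)^{3/2}\qquad (0<r\leq 1)$$
whenever $E(r)\leq\epsilon_0$. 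Given such an estimate, I fix $\theta$ small so that the implied constant times $\theta^{\gamma}$ is at most $1/2$, and then shrink $\epsilon_0$ so that the superlinear term contributes at most $\tfrac14 E(r)$; this yields $E(\theta r)\leq \tfrac34 E(r)\leq\epsilon_0$, so the step iterates and $E(\theta^{k}r)\to 0$ geometrically. Hence $C_1(z_0,r)\leq Cr^{\sigma}$ for some $\sigma>0$, a parabolic Morrey/Campanato decay that forces $u$ to be bounded, and indeed H\"older continuous, on a neighbourhood of $z_0$.

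To produce the decay step I would combine three ingredients. First, Corollary \ref{ABcontrol2} gives $A(z_0,r/2)+B(z_0,r/2)\les C(z_0,r)^{1/2}+C(z_0,r)+D(z_0,r)$, reducing matters to controlling $C$ and $D$. Second, a Gagliardo--Nirenberg--Sobolev interpolation between $L^\infty_tL^2_x$ and $L^2_t\dot H^1_x$ (using $\dot H^1(\RR^3)\hookrightarrow L^6$) bounds the cubic velocity quantities by the energy at the same scale, schematically $C(z_0,r)+C_1(z_0,r)\les (A(z_0,r)+B(z_0,r))^{3/2}+(A(z_0,r)+B(z_0,r))^{2}$; the superlinearity here is precisely what lets small data improve. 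Third --- the \emph{delicate} point --- I must estimate the pressure at a smaller scale. Using $-\Delta p=\partial_i\partial_j(u_iu_j)$, I split $p=p_{\mathrm{loc}}+p_{\mathrm{har}}$ on each ball $B_r(x_0)$, where $p_{\mathrm{loc}}$ solves the Poisson problem with right-hand side $\partial_i\partial_j(u_iu_j\chi_{B_r})$ and $p_{\mathrm{har}}$ is harmonic. Calder\'on--Zygmund bounds $p_{\mathrm{loc}}$ by the cubic velocity term (hence by $C$ or $C_1$), while interior estimates for the harmonic part give a genuine scale gain, schematically $D(z_0,\theta r)\les \theta^{\mu}D(z_0,r)+\theta^{-\nu}C(z_0,r)$ with $\mu>0$ (and similarly for $D_1$). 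Chaining the three ingredients with suitable intermediate radii produces the displayed contraction, whose leading positive power $\gamma$ of $\theta$ is inherited from the harmonic pressure gain $\mu$.

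Once $u$ is bounded near $z_0$, the higher regularity is routine. The system may be read as a perturbed heat (or Stokes) equation with the bounded forcing coming from the convective and pressure terms; iterated interior parabolic estimates --- first upgrading $u$ to H\"older continuity, then differentiating the equation and applying Schauder theory repeatedly --- give that each $\nabla^{k}u$ is H\"older continuous on $\overline{Q}_{1/2}(z_0)$ together with the quantitative bounds $\max_{z\in\overline{Q}_{1/2}(z_0)}|\nabla^{k}u(z)|\leq C_k$ claimed in the statement.

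I expect the pressure step to be the main obstacle. Because $p$ is determined nonlocally from $u$, its small-scale behaviour is not controlled by the local energy alone: the harmonic part $p_{\mathrm{har}}$ carries contributions from outside $B_r(x_0)$, and one must verify that its interior-estimate gain $\theta^{\mu}$ genuinely beats the loss $\theta^{-\nu}$ coming from the Calder\'on--Zygmund part after the interpolation bounds are inserted. Making all the exponents line up so that, for $E(r)$ below the threshold, the good linear term dominates the bad $E(r)^{3/2}$ term --- so that the iteration actually closes --- is the crux. (An alternative avoiding the explicit bookkeeping is a blow-up/compactness argument: if the conclusion failed along a sequence with $C_1+D_1\to 0$, the uniform energy bound from Corollary \ref{ABcontrol2} would give compactness, the limit would satisfy $C_1=D_1=0$ and hence vanish, contradicting persistence of non-regularity; the compactness of the pressure is the analogous sticking point there.)
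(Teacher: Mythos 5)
You should first note that the paper contains no proof of Proposition \ref{C1D1ep0} at all: it is quoted from the literature (\cite[Lemma 2.2]{ESS1}; see also \cite{CKN}, \cite{Lin}, \cite{LS}), so your attempt has to be measured against those standard proofs, and measured that way it has a genuine gap at its crux, the decay step. (i) You cannot launch your iteration from Corollary \ref{ABcontrol2}: its right-hand side involves $C(z_0,r)=r^{-3}\int\|u\|_{L^{12/5}}^4\,dt$ and $D(z_0,r)=r^{-3}\int\|p\|_{L^{6/5}}^2\,dt$, whose time integrability (fourth and second powers) is strictly higher than that of $C_1$ and $D_1$ (cube and $3/2$ powers); for a suitable weak solution these are finite but not quantitatively controlled by the hypothesis $C_1+D_1\le\epsilon_0$ (for instance $C(r)\le A(r)\,C_1(r)^{2/3}$, and $A$ at the top scale is precisely what you do not know). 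That corollary is the paper's \emph{new} ingredient tailored to the $L^{3,q}$ hypothesis of Proposition \ref{epsilon1}; for the present proposition one must instead use the classical energy bound $A(z_0,r/2)+B(z_0,r/2)\les C_1(z_0,r)^{2/3}+C_1(z_0,r)+C_1(z_0,r)^{1/3}D_1(z_0,r)^{2/3}$ of \cite{CKN, LS}. (ii) More seriously, even after this correction the three ingredients cannot yield your contraction. The energy bound is \emph{sublinear}, $A+B\les E^{2/3}$ for $E\le 1$, while Lemma \ref{boundC1} is superlinear with a negative power of $\theta$ on its dominant term, $C_1(\theta r)\les \theta^{-3}(AB)^{3/4}+\theta^{3}A^{3/2}$; composing the two gives exactly $C_1(\theta r)\le C\theta^{-3}E(r)$. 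The superlinear gain you are counting on is cancelled precisely by the sublinearity of the energy inequality (powers $3/2$ and $2/3$ multiply to $1$) --- this is the scale-criticality of the problem --- and a linear inequality with constant $C\theta^{-3}>1$ iterates to nothing, however small $\epsilon_0$ is; the pressure term via Lemma \ref{Dbar} behaves the same way. Every actual proof injects an extra mechanism that uses the equation beyond cutoff energy estimates: the backward heat-kernel test function in \cite{CKN}, or the blow-up/compactness comparison with the \emph{linear} Stokes system in \cite{Lin, LS}, which produces a Campanato-type decay of oscillations. Your parenthetical compactness alternative points at the right idea, but as sketched it also fails: ``persistence of non-regularity'' is not available under the convergence you would obtain; the correct contradiction argument proves a quantitative oscillation-decay estimate by transferring interior regularity of the limiting Stokes solution back to the sequence. (Your target inequality is in fact true, but only a posteriori --- it follows from the proposition itself --- so proving it is essentially equivalent to the proposition.)

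There is also a gap at the conclusion. Even granting the decay $C_1(z_0,r)\le Cr^{\sigma}$, and even running it at every center $z_1\in \overline{Q}_{1/2}(z_0)$ (which is legitimate, since smallness at $z_0$ at scale $1$ gives smallness at nearby centers at scale $1/2$), this only says $\int_{Q_r(z_1)}|u|^3\,dz\le Cr^{2+\sigma}$. This is Morrey-type information far below the Campanato/H\"older range $r^{5+3\alpha}$: the averages $\fint_{Q_r(z_1)}|u|^3\sim r^{\sigma-3}$ still blow up as $r\to0$, so no embedding theorem converts this into boundedness, let alone H\"older continuity. In \cite{CKN} the $L^\infty$ bound ($|u|\le C\epsilon^{2/9}$ a.e.\ on $Q_{1/2}$) is the output of the dyadic induction itself, and in \cite{Lin, LS} H\"older continuity comes from Campanato decay of $u-[u]_{x_0,r}$, not of $\|u\|_{L^3}$. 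Only your final paragraph --- bounded $u$ implies spatial smoothness with uniform constants $C_k$ by Stokes/parabolic bootstrap --- is as routine as you claim.
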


To prove Theorem \ref{localregularity} we use instead a new different version of $\epsilon$-regularity criterion.

\begin{proposition}\label{epsilon1}
There exist positive constants $\epsilon_1$ and $C_k$, $k=0, 1, 2,  \dots$, such that the following holds. Suppose that the pair
$(u,p)$ is a suitable solution to the Navier-Stokes equations in $Q_8(z_0)$ and satisfies the smallness condition
\begin{equation}\label{smallnessCD}
C(u,z_0,8) + D(p,z_0,8) \leq \epsilon_1.
\end{equation}
Then $\nabla^{k} u$ is H\"older continuous on $\overline{Q}_{1/2}(z_0)$ for any  integer $k\geq 0$, and 
\begin{equation*}
\max_{z\in \overline{Q}_{1/2}(z_0)} |\nabla^{k} u(z)| \leq C_k.
\end{equation*}
\end{proposition}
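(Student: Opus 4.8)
The plan is to deduce this sharper criterion from the classical one, Proposition \ref{C1D1ep0}, by showing that smallness of $C+D$ at the large scale $8$ forces smallness of $C_1+D_1$ at the unit scale. Since the restriction of $(u,p)$ to $Q_1(z_0)\subset Q_8(z_0)$ is again a suitable weak solution, once I establish $C_1(u,z_0,1)+D_1(p,z_0,1)\le\epsilon_0$ I may invoke Proposition \ref{C1D1ep0} \emph{directly} on $Q_1(z_0)$ to obtain the H\"older continuity of $\nabla^{k}u$ on $\overline{Q}_{1/2}(z_0)$ together with the stated bounds, with no further rescaling. Thus the whole argument reduces to two quantitative estimates: one bounding $C_1(z_0,1)$ by the energy quantities $A,B$, and one bounding $D_1(z_0,1)$ by the pressure quantity $D$ and the velocity.

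First I would record the elementary monotonicity $A(z_0,1)+B(z_0,1)\le 4\,[A(z_0,4)+B(z_0,4)]$ and then apply Corollary \ref{ABcontrol2} with $r=8$ to get $A(z_0,4)+B(z_0,4)\le C[\,C(z_0,8)^{1/2}+C(z_0,8)+D(z_0,8)\,]\le C(\epsilon_1^{1/2}+\epsilon_1)$. Next, a standard Gagliardo--Nirenberg--Sobolev interpolation on each time slice, namely $\norm{u}_{L^3(B_1)}\le C\,\norm{u}_{L^2(B_1)}^{1/2}\norm{u}_{L^6(B_1)}^{1/2}$ combined with $\norm{u}_{L^6(B_1)}\le C(\norm{\nabla u}_{L^2(B_1)}+\norm{u}_{L^2(B_1)})$ and H\"older's inequality in time, yields $C_1(z_0,1)\le C\,[\,A(z_0,1)^{3/4}B(z_0,1)^{3/4}+A(z_0,1)^{3/2}\,]$. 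Hence $C_1(z_0,1)\le C\,(\epsilon_1^{1/2}+\epsilon_1)^{3/2}$, which is small.

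The pressure bound is the heart of the matter. Taking the divergence of the momentum equation and using $\mathrm{div}\,u=0$ shows $-\Delta p=\partial_i\partial_j(u_iu_j)$ in $Q_8(z_0)$ in the sense of distributions, so for a.e.\ $t$ I would fix a cutoff $\chi$ with $\chi\equiv1$ on $B_2(x_0)$ and $\mathrm{supp}\,\chi\subset B_4(x_0)$ and split $p=p_1+p_2$, where $p_1=R_iR_j(\chi\,u_iu_j)$ and $p_2=p-p_1$. Since $\chi\equiv1$ on $B_2(x_0)$, the function $p_2$ is harmonic there. The Calder\'on--Zygmund bounds give $\norm{p_1}_{L^{3/2}}\le C\norm{u}_{L^3(B_4)}^2$ and $\norm{p_1}_{L^{6/5}}\le C\norm{u}_{L^{12/5}(B_4)}^2$, while interior estimates for the harmonic function $p_2$ upgrade integrability on the smaller ball, $\norm{p_2}_{L^{3/2}(B_1)}\le C\norm{p_2}_{L^{6/5}(B_{3/2})}\le C\big(\norm{p}_{L^{6/5}(B_4)}+\norm{u}_{L^{12/5}(B_4)}^2\big)$. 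Raising to the power $3/2$, integrating over $t\in(t_0-1,t_0)$, and using H\"older's inequality in time to pass to the exponents appearing in $D_1$, I obtain $D_1(z_0,1)\le C\,[\,C_1(z_0,4)+C(z_0,8)^{3/4}+D(z_0,8)^{3/4}\,]$, which is again small.

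I expect the pressure step to be the main obstacle, and for a structural reason: the jump from the $L^{6/5}$-based quantity $D$ in the hypothesis to the $L^{3/2}$-based quantity $D_1$ demanded by Proposition \ref{C1D1ep0} is a genuine gain of integrability that cannot be supplied by the singular (Calder\'on--Zygmund) part $p_1$ and must instead be extracted from the smoothness of the harmonic part $p_2$; some care is also needed because $p_2$ is only controlled modulo its local mean, so its constant part has to be absorbed using the average of $\m{p_2}$ on $B_{3/2}(x_0)$. Finally, choosing $\epsilon_1$ small enough that $C_1(z_0,1)+D_1(z_0,1)\le\epsilon_0$ lets me apply Proposition \ref{C1D1ep0} and conclude.
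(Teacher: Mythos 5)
Your proposal is correct and takes essentially the same route as the paper: the paper likewise reduces to Proposition \ref{C1D1ep0} by proving $C_1(z_0,1)+D_1(z_0,1)\le\epsilon_0$, using Corollary \ref{ABcontrol2} for the energy, a slice-wise interpolation (its Lemma \ref{boundC1}, which you re-derive via Gagliardo--Nirenberg) for $C_1$, and a Calder\'on--Zygmund-plus-harmonic pressure decomposition (its Lemma \ref{Dbar}) for $D_1$. The only differences are technical: the paper uses the indicator $\chi_{B_\rho}$ with spatial means subtracted so that the CZ part of $D_1$ is bounded by $A^{3/4}B^{3/4}$ directly, whereas your cutoff version routes through $C_1(z_0,4)$ --- whose smallness, note, requires the scale-$4$ version of your interpolation step (available by scale invariance), not just the unit-scale one you wrote.
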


The proof of Proposition \ref{epsilon1} will be given at the end of this section. It requires the following two preliminary results. The first one is by now a well-known lemma  that  can be found in  \cite[Lemma 2.1]{Lin}.

\begin{lemma}\label{boundC1}
Suppose that $(u,p)$ is a suitable weak solution 
to the Navier-Stokes equations in $Q=\om\times (a, b)$.
Let $z_0=(x_0, t_0)$ and let  $\rho>0$ be such that $Q_{\rho}(z_0)\subset Q=\om\times (a, b)$. For any $r\in (0, \rho]$ we have
$$C_1(z_0,r)\leq C \Big(\frac{\rho}{r} \Big)^3 A(z_0,\rho)^{3/4} B(z_0,\rho)^{3/4} + C \Big(\frac{r}{\rho} \Big)^3 A(z_0,\rho)^{3/2}.$$
\end{lemma}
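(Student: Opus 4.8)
The plan is to reduce the space--time estimate to a fixed--time interpolation inequality on balls and then integrate in $t$. For a.e.\ fixed $t$ I will bound $\norm{u(\cdot,t)}_{L^3(B_r(x_0))}^3$ by quantities built only from $\int_{B_\rho}|u|^2\,dx$ and $\int_{B_\rho}|\nabla u|^2\,dx$, arranged so that the two terms in the claimed inequality appear after integration over $t\in(t_0-r^2,t_0)$ and multiplication by $r^{-2}$.

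First I would split $u$ into its mean and oscillation over the \emph{large} ball. Writing $\bar u(t)$ for the average of $u(\cdot,t)$ over $B_\rho(x_0)$, decompose $u=\bar u+(u-\bar u)$ and use $\norm{u}_{L^3(B_r)}^3\le C\norm{\bar u}_{L^3(B_r)}^3+C\norm{u-\bar u}_{L^3(B_\rho)}^3$. The constant part is elementary: $\norm{\bar u}_{L^3(B_r)}^3=c\,r^3|\bar u|^3$ while $|\bar u|\le c\,\rho^{-3/2}\norm{u}_{L^2(B_\rho)}$ by Cauchy--Schwarz, so this contributes $C(r/\rho)^3\rho^{-3/2}\norm{u}_{L^2(B_\rho)}^3$, the source of the second term. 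For the oscillation part $w=u-\bar u$ I would apply the scale--invariant Gagliardo--Nirenberg/Sobolev inequality $\norm{w}_{L^3(B_\rho)}^3\le C\norm{w}_{L^2(B_\rho)}^{3/2}\norm{\nabla w}_{L^2(B_\rho)}^{3/2}+C\rho^{-3/2}\norm{w}_{L^2(B_\rho)}^3$, using $\nabla w=\nabla u$ and $\norm{w}_{L^2(B_\rho)}\le\norm{u}_{L^2(B_\rho)}$; the leftover $\rho^{-3/2}\norm{w}_{L^2}^3$ is absorbed by the Poincaré inequality $\norm{w}_{L^2(B_\rho)}\le C\rho\norm{\nabla u}_{L^2(B_\rho)}$ applied to one factor. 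The upshot is the pointwise--in--time bound
\[
\norm{u(\cdot,t)}_{L^3(B_r)}^3\le C\norm{u}_{L^2(B_\rho)}^{3/2}\norm{\nabla u}_{L^2(B_\rho)}^{3/2}+C(r/\rho)^3\rho^{-3/2}\norm{u}_{L^2(B_\rho)}^3.
\]

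Then I would integrate over $t\in(t_0-r^2,t_0)$ and multiply by $r^{-2}$. In the second term I bound $\norm{u}_{L^2(B_\rho)}^3$ by its supremum $(\rho A(z_0,\rho))^{3/2}$, so its time integral is $\le r^2(\rho A)^{3/2}$ and, after the prefactors, yields exactly $C(r/\rho)^3 A(z_0,\rho)^{3/2}$. In the first term I pull out $\sup_t\norm{u}_{L^2(B_\rho)}^{3/2}=(\rho A)^{3/4}$ and estimate $\int_{t_0-r^2}^{t_0}\norm{\nabla u}_{L^2(B_\rho)}^{3/2}\,dt$ by Hölder in time with conjugate exponents $4/3$ and $4$; since $\int_{t_0-r^2}^{t_0}\norm{\nabla u}_{L^2(B_\rho)}^2\,dt\le\int_{Q_\rho}|\nabla u|^2=\rho B(z_0,\rho)$, this is $\le r^{1/2}(\rho B)^{3/4}$. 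Collecting factors gives a first contribution $C(\rho/r)^{3/2}A^{3/4}B^{3/4}$, and since $\rho\ge r$ this is dominated by $C(\rho/r)^3A^{3/4}B^{3/4}$, which completes the claimed bound.

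I expect the only genuine subtlety to be producing the favorable small factor $(r/\rho)^3$ in the second term. A naive interpolation carried out directly on $B_r$ instead yields a term of the form $r^{-3/2}\norm{u}_{L^2(B_\rho)}^3$, i.e.\ a \emph{large} factor $(\rho/r)^{3/2}$, which is useless for the intended iteration toward small scales. Subtracting the mean over $B_\rho$ before interpolating is precisely what converts this into the gain $(r/\rho)^3$; everything else is Hölder/Young bookkeeping together with the standard scale--invariant Sobolev and Poincaré inequalities on balls.
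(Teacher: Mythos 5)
Your proof is correct, and it follows essentially the same route as the argument the paper relies on: the paper does not prove this lemma itself but cites \cite[Lemma 2.1]{Lin}, whose standard proof is exactly your scheme (split $u$ into its mean over the large ball $B_\rho$ plus oscillation, apply the scale-invariant Gagliardo--Nirenberg and Poincar\'e inequalities at fixed time, then H\"older in time with exponents $4/3$ and $4$). Your bookkeeping is sound --- indeed your first term comes out with the sharper factor $(\rho/r)^{3/2}$, which is then dominated by $(\rho/r)^{3}$ since $r\le\rho$ --- so nothing further is needed.
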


In what follows we shall use the following notation to denote the spatial average of a function $f$ over a ball $B_r(x_0)$:
$$[f]_{x_0, r}:=\frac{1}{|B_r(x_0)|}\int_{B_r(x_0)} f(x) \, dx.$$

\begin{lemma}\label{Dbar}
Suppose that $(u,p)$ is a suitable weak solution 
to the Navier-Stokes equations in $Q=\om\times (a, b)$.
Let $z_0=(x_0, t_0)$ and let  $\rho>0$ be such that $Q_{\rho}(z_0)\subset Q=\om\times (a, b)$. For any $r\in (0, \rho/4]$ we have
\begin{equation*}
D_1(z_0, r)\leq C \Big(\frac{\rho}{r}\Big)^{3/2} A(z_0,\rho)^{3/4} B(z_0,\rho)^{3/4} + C \Big(\frac{r}{\rho}\Big)^{3/2} D(z_0,\rho)^{3/4}.
\end{equation*}
\end{lemma}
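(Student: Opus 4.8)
The plan is to estimate the pressure at the small scale $r$ by splitting it, at each fixed time, into a local Calderón--Zygmund part generated by the velocity and a harmonic remainder, and then to exploit the gain that harmonic functions enjoy at smaller scales. Throughout fix $t\in(t_0-r^2,t_0)$ and abbreviate $A=A(z_0,\rho)$, $B=B(z_0,\rho)$, $D=D(z_0,\rho)$. Choose $\eta\in C_0^{\infty}(B_\rho(x_0))$ with $\eta\equiv 1$ on $B_{\rho/2}(x_0)$ and set $\bar u=[u]_{x_0,\rho}$. Define the local pressure $\tilde p$ as the second-order Riesz transform, i.e. the singular integral with kernel $\partial_i\partial_j\Gamma$ ($\Gamma$ the Newtonian potential), applied to $\eta\,(u_i-\bar u_i)(u_j-\bar u_j)$, so that $-\Delta\tilde p=\partial_i\partial_j\big(\eta(u_i-\bar u_i)(u_j-\bar u_j)\big)$ on $\RR^3$. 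The key point is that on $B_{\rho/2}(x_0)$, where $\eta\equiv 1$, the divergence-free condition $\partial_j u_j=0$ forces all terms linear in $u$ to drop out, giving $\partial_i\partial_j\big((u_i-\bar u_i)(u_j-\bar u_j)\big)=\partial_i\partial_j(u_iu_j)$. Since $-\Delta p=\partial_i\partial_j(u_iu_j)$ in the sense of distributions (a consequence of \eqref{NSEqu} together with $\mathrm{div}\,u=0$, obtained by taking the divergence), the remainder $h:=p-\tilde p$ is harmonic on $B_{\rho/2}(x_0)$.

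For the local part I would use the $L^{3/2}$-boundedness of the Riesz transforms, $\norm{\tilde p}_{L^{3/2}(\RR^3)}\leq C\norm{u-\bar u}_{L^3(B_\rho)}^2$, followed by the Sobolev--Poincaré interpolation $\norm{u-\bar u}_{L^3(B_\rho)}^2\leq C\norm{u}_{L^2(B_\rho)}\norm{\nabla u}_{L^2(B_\rho)}$. Crucially, subtracting the mean removes any lower-order term here, so that only the scaling-invariant product of $A$ and $B$ survives. Raising to the power $3/2$, integrating over $(t_0-r^2,t_0)$, and applying Hölder in time (exponents $4/3$ and $4$) together with the definitions of $A$ and $B$ yields $r^{-2}\int_{t_0-r^2}^{t_0}\norm{\tilde p}_{L^{3/2}(B_r)}^{3/2}\,dt\leq C(\rho/r)^{3/2}A^{3/4}B^{3/4}$, which is precisely the first term of the assertion.

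For the harmonic part I would invoke the interior sup bound for harmonic functions, $\norm{h}_{L^\infty(B_{\rho/4})}\leq C\rho^{-5/2}\norm{h}_{L^{6/5}(B_{\rho/2})}$ (sub-mean value plus Jensen), so that for $r\leq\rho/4$ one gains a full factor $r^2$, namely $\norm{h}_{L^{3/2}(B_r)}\leq Cr^2\rho^{-5/2}\norm{h}_{L^{6/5}(B_{\rho/2})}$. Writing $\norm{h}_{L^{6/5}(B_{\rho/2})}\leq\norm{p}_{L^{6/5}(B_\rho)}+\norm{\tilde p}_{L^{6/5}(B_{\rho/2})}$ and controlling the second summand by Hölder on the ball from the already-established $L^{3/2}$ bound, $\norm{\tilde p}_{L^{6/5}(B_{\rho/2})}\leq C\rho^{1/2}\norm{\tilde p}_{L^{3/2}(\RR^3)}$, I then raise to the power $3/2$ and integrate in time. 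Hölder in time against the definition of $D$ turns the $p$-contribution into $C(r/\rho)^{3/2}D^{3/4}$, while the $\tilde p$-contribution reproduces $C(r/\rho)^{3/2}A^{3/4}B^{3/4}$, which, since $r\leq\rho/4$, is absorbed into the first term. Adding the local and harmonic estimates gives the lemma.

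The step I expect to be most delicate is the local pressure decomposition itself: one must arrange it so that, first, the remainder is genuinely harmonic on a fixed fraction of the ball, which dictates the use of the cutoff and of the Newtonian-potential representation of $p$; and second, the Calderón--Zygmund part is expressed through the oscillation $u-\bar u$ rather than through $u$ itself. This second point relies essentially on $\mathrm{div}\,u=0$ and is exactly what prevents a spurious lower-order term of the form $A^{3/2}$ from appearing. Once the decomposition is correctly set up, forcing the bookkeeping of spatial scales and time-exponents to land on the precise powers $(\rho/r)^{3/2}$ and $(r/\rho)^{3/2}$ is routine, but it must be carried out with care, in particular by retaining the time integral over $(t_0-r^2,t_0)$ rather than enlarging it prematurely.
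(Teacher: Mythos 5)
Your proposal is correct and follows essentially the same route as the paper: a time-sliced decomposition $p=\tilde p+h$ with $\tilde p$ a Calder\'on--Zygmund transform of the oscillation $(u-[u]_{x_0,\rho})\otimes(u-[u]_{x_0,\rho})$ (the paper uses the sharp cutoff $\chi_{B_\rho}$, making $h$ harmonic on all of $B_\rho$, versus your smooth cutoff giving harmonicity on $B_{\rho/2}$ --- an immaterial difference), followed by the interior gain for harmonic functions from $L^{6/5}(B_{\rho/2})$ down to scale $r$, re-substitution of $h=p-\tilde p$, the Calder\'on--Zygmund plus Gagliardo--Nirenberg bound $\|\tilde p\|_{L^{3/2}}\leq C\|\nabla u\|_{L^2}\|u\|_{L^2}$, and H\"older in time with exponents $4/3$ and $4$. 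Your exponent bookkeeping reproduces exactly the two terms $(\rho/r)^{3/2}A^{3/4}B^{3/4}$ and $(r/\rho)^{3/2}D^{3/4}$, so the argument is complete.
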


\begin{proof} Let $h_{x_0, \rho}=h_{x_0, \rho}(\cdot, t)$ be a function on $B_\rho(x_0)$ for a.e. $t$ such that 
$$h_{x_0, \rho}=p-\tilde{p}_{x_0, \rho}\quad {\rm in~}  B_\rho(x_0),$$ 
where $\tilde{p}_{x_0, \rho}$ is defined by 
$$\tilde{p}_{x_0, \rho}=R_iR_j[(u_i-[u_i]_{x_0,\rho})(u_j-[u_j]_{x_0,\rho})\chi_{B_\rho(x_0)}].$$
Here $R_i=D_i(-\Delta)^{-\frac{1}{2}}$, $i=1,2,3$, is the $i$-th Riesz transform. Note that for any $\varphi\in C_0^{\infty}(B_\rho(x_0))$, we have
\begin{eqnarray*}
-\int_{B_\rho(x_0)} \tilde{p}_{x_0, \rho}\Delta \varphi dx&=&\int_{B_{\rho}(x_0)}(u_i-[u_i]_{x_0,\rho})(u_j-[u_j]_{x_0,\rho}) D_{ij}\varphi\, dx\\
&=&\int_{B}u_i u_j D_{ij}\varphi\, dx,
\end{eqnarray*}
which follows from  the properties $-R_i R_j(\Delta \varphi)=D_{ij}\varphi$ and ${\rm div}\, u=0$.
Thus, as $p$ also solves
$$-\Delta p={\rm div}\, {\rm div} (u\otimes u)$$
in the  distributional sense, we see that  $h_{x_0, \rho}$ is harmonic in  $B_{\rho}(x_0)$ for a.e. $t$.

With this decomposition of the pressure $p$, we have 
\begin{eqnarray*}
\int_{B_r(x_0)} |p(x,t)|^{3/2} dx &=& C \int_{B_r(x_0)} |\tilde{p}_{x_0,\rho}+h_{x_0,\rho}|^{3/2} dx\\
&\leq& C \int_{B_\rho(x_0)} |\tilde{p}_{x_0, \rho}|^{3/2} dx + C\int_{B_r(x_0)} |h_{x_0, \rho}|^{3/2} dx.
\end{eqnarray*}

Next, as $h_{x_0,\rho}$ is harmonic in $B_{\rho}(x_0)$, for $r\in (0, \rho/4]$ there holds
\begin{eqnarray*}
\Big(\fint_{B_r(x_0)} |h_{x_0, \rho}|^{3/2} dx\Big)^{2/3}&\leq&  \Big(\fint_{B_{r}(x_0)} |h_{x_0, \rho}|^{2} dx\Big)^{1/2}\\
&\leq& C\,  \Big(\fint_{B_{\rho/4}(x_0)} |h_{x_0, \rho}|^{2} dx\Big)^{1/2}\\
&\leq& C \Big(\fint_{B_{\rho/2}(x_0)} |h_{x_0, \rho}|^{6/5} dx\Big)^{5/6}.
\end{eqnarray*}

This gives
\begin{eqnarray*}
\int_{B_r(x_0)} |p(x,t)|^{3/2} dx &\leq& C \int_{B_\rho(x_0)} |\tilde{p}_{x_0, \rho}|^{3/2} dx\\
&& +\,  C \frac{r^3}{\rho^{15/4}}\Big(\int_{B_{\rho/2}(x_0)} |h_{x_0, \rho}|^{6/5} dx\Big)^{5/4}.
\end{eqnarray*}

Thus using $h_{x_0,\rho}=p-\tilde{p}_{x_0,\rho}$ again we find
\begin{eqnarray*}
\int_{B_r(x_0)} |p(x,t)|^{3/2} dx &\leq& C \int_{B_\rho(x_0)} |\tilde{p}_{x_0, \rho}|^{3/2} dx\\
&& +\,  C \frac{r^3}{\rho^{15/4}}\Big(\int_{B_{\rho}(x_0)} |\tilde{p}_{x_0, \rho}|^{6/5} dx\Big)^{5/4}\\
&& +\,  C \frac{r^3}{\rho^{15/4}}\Big(\int_{B_{\rho}(x_0)} |p|^{6/5} dx\Big)^{5/4}.
\end{eqnarray*}

By H\"older's inequality this yields
\begin{eqnarray}\label{pptil}
\int_{B_r(x_0)} |p(x,t)|^{3/2} dx &\leq& C\Big[1+\Big(\frac{r}{\rho}\Big)^3\Big] \int_{B_\rho(x_0)} |\tilde{p}_{x_0, \rho}|^{3/2} dx\\
&& +\,  C \frac{r^3}{\rho^{15/4}}\Big(\int_{B_{\rho}(x_0)} |p|^{6/5} dx\Big)^{5/4}.\nonumber
\end{eqnarray}

On the other hand, by the Calder\'on-Zygmund estimate and a Sobolev interpolation inequality (see, e.g., (1.1) of \cite{LS})   we find
\begin{eqnarray}\label{ptil1}
\lefteqn{\int_{B_\rho(x_0)}|\tilde{p}_{x_0,\rho}|^{3/2} dx\leq C \int_{B_\rho(x_0)}| u-[u]_{x_0,\rho}|^{3} dx}\\
&\leq& C \Big(\int_{B_\rho(x_0)}| \nabla u|^{2} dx\Big)^{3/4} \Big(\int_{B_\rho(x_0)}|u-[u]_{x_0,\rho}|^{2} dx\Big)^{3/4}\nonumber\\
%&& + C \rho^{-3/2} \Big(\int_{B_\rho(x_0)}|u-[u]_{x_0,\rho}|^{2} dx\Big)^{3/2}\nonumber\\
&\leq& C \Big(\int_{B_\rho(x_0)}| \nabla u|^{2} dx\Big)^{3/4} \Big(\int_{B_\rho(x_0)}|u|^{2} dx\Big)^{3/4},\nonumber
\end{eqnarray}
where we used  the bound 
$$\int_{B_r(x_0)}|u-[u]_{x_0,r}|^{2} dx\leq \int_{B_r(x_0)}|u|^{2} dx$$ 
in the last inequality.

Combining \eqref{pptil}, \eqref{ptil1} and using $r/\rho\leq 1/4$ we have 
\begin{eqnarray*}
\int_{B_r(x_0)} |p(x,t)|^{3/2} dx&\leq& C \Big(\int_{B_\rho(x_0)}| \nabla u|^{2} dx\Big)^{3/4} \Big(\int_{B_\rho(x_0)}|u|^{2} dx\Big)^{3/4}\\
&& +\,  C \frac{r^3}{\rho^{15/4}}\Big(\int_{B_{\rho}(x_0)} |p|^{6/5} dx\Big)^{5/4}.
\end{eqnarray*}

Integrating the last bound with respect to $dt/r^2$ over the interval $(t_0-r^2, t_0)$ and using H\"older's inequality we obtain 
\begin{eqnarray*}
D_1(z_0, r) &\leq& C \Big(\frac{\rho}{r}\Big)^{3/2} A(z_0,\rho)^{3/4} B(z_0,\rho)^{3/4}  + C \Big(\frac{r}{\rho}\Big)^{3/2} D(z_0,\rho)^{3/4}
\end{eqnarray*}
as desired.
\end{proof}

We are now ready to prove Proposition \ref{epsilon1}. 

\begin{proof}[{\bf Proof of Proposition \ref{epsilon1}}]
By Lemma \ref{boundC1} and Corollary \ref{ABcontrol2} we have  
\begin{eqnarray*}
C_1(z_0,1) &\leq& C   A(z_0,1)^{3/4} B(z_0,1)^{3/4} + C  A(z_0,1)^{3/2}\\
&\leq& C    [A(z_0,1) + B(z_0,1)]^{3/2}\\
&\leq& C    [C(z_0,2)^{1/2}+ C(z_0,2) + D(z_0,2)]^{3/2}.
\end{eqnarray*}

Thus using \eqref{smallnessCD} we find
\begin{eqnarray}\label{C1ep1}
C_1(z_0,1) &\leq&  C    (\epsilon_1^{1/2}+ \epsilon_1)^{3/2}.
\end{eqnarray}

On the other hand, using Lemma \ref{Dbar} with $r=1$ and $\rho=4$ there holds
\begin{equation*}
D_1(z_0, 1) \leq C  [A(z_0,4) + B(z_0,4)]^{3/2}+  C  D(z_0,4)^{3/4},
\end{equation*}
which by Corollary \ref{ABcontrol2} and \eqref{smallnessCD} yields
\begin{eqnarray*}
D_1(z_0, 1) &\leq& C  [C(z_0,8)^{1/2} + C(z_0,8)+ D(z_0,8)]^{3/2}+  C  D(z_0,8)^{3/4}\\
&\leq& C  \epsilon_1^{3/2}+  C  \epsilon_1^{3/4}.
\end{eqnarray*}

Now choosing $\epsilon_1$ sufficiently small  in \eqref{C1ep1} and the last bound, we can make 
\begin{equation*}
C_1(z_0,1) + D_1(z_0,1) \leq \epsilon_0,
\end{equation*}
and thus Lemma \ref{C1D1ep0} implies the desired regularity result.
\end{proof}

\section{Proof of Theorems \ref{localregularity} and \ref{nearMarc}}
This section is devoted to the proof of Theorems \ref{localregularity} and \ref{nearMarc}. We shall need the following lemma.

\begin{lemma}\label{weaktosuitable} Suppose that the pair of functions $(u,p)$ satisfies 
the Navier-Stokes equations in $Q_1(0,0)=B_1(0)\times(-1,0)$ in the sense of distributions and has the  properties \eqref{u-reg}, \eqref{p-assum}, and \eqref{serrinlorentz} for some  $q\in (3,\infty]$.
Then $(u,p)$ forms a suitable solution to the Navier-Stokes equations in $Q_{5/6}$ with a generalized energy {\it equality},
$u\in L^4(Q)$, and $p\in L^2(Q_{5/6})$.  Moreover,
the inequality
\begin{equation}\label{foralltbound}
\norm{u(\cdot, t)}_{L^{3, q}(B_{3/4})}\leq  \norm{u}_{L^{\infty}(-(3/4)^2,0; L^{3, q}(B_{3/4}))}
\end{equation}
holds for \emph{all} $t\in [-(3/4)^2, 0]$, and the function
$$t\rightarrow \int_{B_{3/4}} u(x,t) w(x) dx$$
is continuous on $[-(3/4)^2, 0]$ for any $w\in L^{3/2, q/(q-1)}(B_{3/4})$.  Here it is understood as usual that $q/(q-1)=1$ in the case $q=\infty$.
\end{lemma}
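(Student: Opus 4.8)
The plan is to first upgrade the integrability of the pair $(u,p)$ to $u\in L^4$ and $p\in L^2$ on a slightly smaller cylinder, since once these are in hand the remaining assertions follow from fairly standard arguments. The engine is a Lorentz-space interpolation combining the hypothesis \eqref{serrinlorentz}, namely $u\in L^\infty(-1,0;L^{3,q}(B_1))$, with the Sobolev embedding coming from \eqref{u-reg}. The key point is to use the \emph{sharp} (Lorentz-refined) Sobolev embedding $W^{1,2}(B_1)\hookrightarrow L^{6,2}(B_1)$ rather than the usual $W^{1,2}\hookrightarrow L^6$, so that for a.e.\ $t$ the interpolation inequality for Lorentz spaces gives
\begin{equation*}
\norm{u(\cdot,t)}_{L^4(B_1)}\le C\,\norm{u(\cdot,t)}_{L^{4,r}(B_1)}\le C\,\norm{u(\cdot,t)}_{L^{3,q}(B_1)}^{1/2}\,\norm{u(\cdot,t)}_{L^{6,2}(B_1)}^{1/2},
\end{equation*}
where the second Lorentz index satisfies $1/r=1/(2q)+1/4$, hence $r\le 4$ and $L^{4,r}\hookrightarrow L^4$. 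This is the one genuinely delicate point, and it is exactly where I expect the main obstacle to lie: with the ordinary embedding $L^6=L^{6,6}$ one would land in $L^{4,r}$ with $r>4$, which does \emph{not} embed into $L^4$, so the improvement to $L^{6,2}$ is essential to reach $L^4$ from the borderline space $L^{3,q}$. Raising to the fourth power and integrating in $t$, then using $N':=\norm{u}_{L^\infty(-1,0;L^{3,q}(B_1))}<\infty$ together with $\int_{-1}^0\norm{u(\cdot,t)}_{L^{6,2}(B_1)}^2\,dt\le C\int_{-1}^0\norm{u(\cdot,t)}_{W^{1,2}(B_1)}^2\,dt<\infty$ from \eqref{u-reg}, I obtain $u\in L^4(Q_{5/6})$ (in fact on $B_1\times(-1,0)$).

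For the pressure I would reuse the Newtonian/harmonic splitting from the proof of Lemma \ref{Dbar}: on a ball $B'\Subset B_1$ write, for a.e.\ $t$, $p=\tilde p+h$ with $\tilde p=R_iR_j(u_iu_j\chi_{B'})$ and $h$ harmonic in $B'$. Since $u\in L^4$ gives $u\otimes u\in L^2$, the Calder\'on--Zygmund estimate yields $\norm{\tilde p(\cdot,t)}_{L^2}\le C\norm{u(\cdot,t)}_{L^4}^2$, so $\tilde p\in L^2$ in space–time. For the harmonic part, interior estimates give $\norm{h(\cdot,t)}_{L^2(B'')}\le C\norm{h(\cdot,t)}_{L^1(B')}\le C(\norm{p(\cdot,t)}_{L^1(B')}+\norm{\tilde p(\cdot,t)}_{L^1(B')})$ for $B''\Subset B'$, and the right-hand side is in $L^2_t$ by \eqref{p-assum} and the bound just obtained for $\tilde p$. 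Hence $p\in L^2(Q_{5/6})$.

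With $u\in L^4(Q_{5/6})$ and $p\in L^2(Q_{5/6})$ in hand, the pair becomes a suitable weak solution satisfying a generalized energy \emph{equality}: the condition $u\in L^4_{t,x}$ is precisely Shinbrot's borderline criterion $2/4+2/4=1$ for validity of the energy identity. Concretely, I would test the momentum equation with $\phi\,u^{\varepsilon}$ (a spatial mollification of $u$ against the cutoff $\phi$ of Definition \ref{SWS}) and pass to the limit $\varepsilon\to0$: every term converges because $u\otimes u\in L^2$, $\nabla u\in L^2$ and $p\in L^2$ make the products $u\otimes u:\nabla(\phi u)$, $|u|^2 u\cdot\nabla\phi$ and $p\,u\cdot\nabla\phi$ integrable, and the mollification commutator vanishes. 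This yields the local energy \emph{equality} on $Q_{5/6}$, in particular verifying condition (iii) of Definition \ref{SWS}.

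Finally, for the two continuity/everywhere statements I would argue by duality. From the equation $\partial_t u=\Delta u-{\rm div}(u\otimes u)-\nabla p$ lies in $L^2_t$ of a negative-order space locally (using $\nabla u,\ u\otimes u,\ p\in L^2$), so after modification on a null set of times $u$ is weakly continuous into $L^2(B_{3/4})$ and pairing with fixed $w\in L^2(B_{3/4})$ is continuous in $t$. To reach $w\in L^{3/2,q/(q-1)}(B_{3/4})$, the predual of $L^{3,q}$, I would approximate $w$ by $L^2$ functions—simple functions are dense since the second index $q/(q-1)<\infty$ for every $q\in(3,\infty]$—and use the a.e.\ bound $\norm{u(\cdot,t)}_{L^{3,q}(B_{3/4})}\le N:=\norm{u}_{L^\infty(-(3/4)^2,0;L^{3,q}(B_{3/4}))}$ to estimate $|\int u(\cdot,t)(w-w_n)|\le N\norm{w-w_n}_{L^{3/2,q/(q-1)}}$ uniformly in $t$; a uniform limit of continuous functions is continuous, giving the asserted weak continuity. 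The everywhere bound \eqref{foralltbound} then follows from lower semicontinuity of the $L^{3,q}$ norm under this weak convergence together with the duality $L^{3,q}=(L^{3/2,q/(q-1)})^{\ast}$: for each admissible $w$ the continuous map $t\mapsto\int u(\cdot,t)w$ is $\le N$ for a.e.\ $t$, hence for \emph{all} $t$, and taking the supremum over $w$ recovers $\norm{u(\cdot,t)}_{L^{3,q}(B_{3/4})}\le N$ for every $t\in[-(3/4)^2,0]$.
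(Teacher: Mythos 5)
Your proposal is correct and its skeleton matches the paper's: Lorentz interpolation against the Sobolev embedding coming from \eqref{u-reg} gives $u\in L^4$; the splitting $p=\tilde p+h$ with $\tilde p=R_iR_j[(u_iu_j)\chi]$ (Calder\'on--Zygmund for $\tilde p$, interior estimates for the harmonic part $h$ against the $L^2_tL^1_x$ bound \eqref{p-assum}) gives $p\in L^2(Q_{5/6})$; mollification gives the energy equality; and an everywhere-in-time continuity property combined with the a.e.\ Lorentz bound and density in the predual $L^{3/2,q/(q-1)}$ gives \eqref{foralltbound} and the weak continuity. Two substantive differences are worth recording. First, the step you single out as the "main obstacle" is not one, and your necessity claim there is false: the paper gets $u\in L^4$ directly from $\norm{u(\cdot,t)}_{L^4(B_1)}\le C\norm{u(\cdot,t)}_{L^{3,q}(B_1)}^{1/2}\norm{u(\cdot,t)}_{L^{6}(B_1)}^{1/2}$ with the \emph{plain} $L^6$ norm. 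Your objection would be valid if one insisted on O'Neil's product inequality, which multiplies second indices; but this is a real-interpolation inequality, for which the second indices of the endpoint spaces are irrelevant, since $(L^{3,\infty},L^{6,\infty})_{1/2,4}=L^{4,4}=L^4$. Concretely: split $\norm{u}_{L^4}^4=\int_0^\infty (u^*)^4\,ds$ at $s=\tau$, bound the near part by $\norm{u}_{L^6}^4\tau^{1/3}$ (H\"older) and the tail by $3\norm{u}_{L^{3,\infty}}^4\tau^{-1/3}$ (using $u^*(s)\le \norm{u}_{L^{3,\infty}}s^{-1/3}$), and optimize in $\tau$. So your detour through the sharp embedding $W^{1,2}\hookrightarrow L^{6,2}$ is correct but unnecessary. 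Second, for the continuity statements the paper invokes interior regularity for the non-stationary Stokes system to get $\partial_t u,\nabla^2u,\nabla p\in L^{4/3}(Q_{3/4})$, hence \emph{strong} continuity $u\in C([-(3/4)^2,0];L^{4/3}(B_{3/4}))$, and then tests against $C_0^\infty$ and uses density; you use only $\partial_t u\in L^2_t H^{-1}_{\mathrm{loc}}$ (from $\nabla u,\,u\otimes u,\,p\in L^2$) to get weak $L^2$ continuity, which is softer and avoids Stokes theory entirely (the paper loses nothing by using Stokes estimates here, since it needs them again in Proposition \ref{limit-infty}). One ordering caveat in your final step: for $w$ merely in $L^{3/2,q/(q-1)}(B_{3/4})$, the map $t\mapsto\int_{B_{3/4}} u(x,t)w(x)\,dx$ is not known to be defined at \emph{every} $t$ until \eqref{foralltbound} is established, so you must first run the duality argument with $w\in L^2$ only (continuity plus the a.e.\ bound gives the bound at every $t$; the supremum over such $w$ in the predual unit ball then gives $\norm{u(\cdot,t)}_{L^{3,q}(B_{3/4})}\le CN$ for all $t$), and only afterwards identify the uniform limit of $\int u(\cdot,t)w_n\,dx$ with $\int u(\cdot,t)w\,dx$; with that reordering your argument closes.
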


\begin{proof}
By  Sobolev inequality we have $u\in L^2(-1,0; L^6(B_1)),$
which  using \eqref{serrinlorentz} and the interpolative inequality  
$$\norm{u(\cdot, t)}_{L^4(B_1)} \leq C \norm{u(\cdot,t)}_{L^{3,q}(B_1)}^{\frac{1}{2}} \norm{u(\cdot,t)}_{L^6(B_1)}^{\frac{1}{2}}$$
yields 
\begin{equation}\label{uinLfour}
u\in L^4(Q).
\end{equation}

Thus by H\"older's inequality the nonlinear term 
\begin{equation}\label{non-reg}
u\cdot\nabla u \in L^{4/3}(Q).
\end{equation}

As above,  we have a decomposition
$$p=\tilde{p}+h,$$
where $\tilde{p}=R_iR_j[(u_i u_j)\chi_{B_1}]$, and  $h$ is harmonic in  $B_{1}$. By Calder\'on-Zygmund estimate we have 
\begin{equation}\label{ptilbound}
\norm{\tilde{p}}_{L^{2}(-1, 0; L^{2}(B_1))}\leq C \norm{u}_{L^{4}(-1, 0; L^{4}(B_1))}^2=C \norm{u}^2_{L^4(Q)},
\end{equation}
and by harmonicity and assumption \eqref{p-assum} there holds
\begin{eqnarray}\label{hbound}
\lefteqn{\norm{h}_{L^{2}(-1, 0; L^{\infty}(B_{5/6}))}\leq C \norm{h}_{L^{2}(-1, 0; L^{1}(B_{1}))}}\\
&=& C \norm{p-\tilde{p}}_{L^{2}(-1, 0; L^{1}(B_{1}))}\nonumber\\
&\leq& C \Big[\norm{p}_{L^{2}(-1, 0; L^{1}(B_{1}))}+ \norm{u}^2_{L^4(Q)}\Big].\nonumber
\end{eqnarray}

Estimates \eqref{ptilbound}--\eqref{hbound} imply in particular that the pressure 
\begin{equation}\label{p-reg}
p\in L^2(Q_{5/6}).
\end{equation}

Using the inclusions  \eqref{u-reg},  \eqref{uinLfour}, \eqref{non-reg}, \eqref{p-reg}, and the local interior regularity of  non-stationary Stokes systems we eventually find 
$$\int_{Q_{3/4}} (|u|^4 + |\partial_t u|^{4/3} + |\nabla^2 u|^{4/3} +|\nabla p|^{4/3}) dxdt <+\infty.$$

It then follows that 
$$u\in C(-[(3/4)^2, 0]; L^{4/3}(B_{3/4}))$$
and thus the function
$$g_{\varphi}(t):=\int_{B_{3/4}} u(x,t) \varphi(x) dx$$
is continuous on $[-(3/4)^2, 0]$ for any $\varphi\in C_0^{\infty}(B_{3/4})$. This yields
$$\Big|\int_{B_{3/4}} u(x,t) \varphi(x) dx\Big| \leq C \norm{\varphi}_{L^{3/2, q/(q-1)}(B_{3/4})} \norm{u}_{L^{\infty}(-(3/4)^2,0; L^{3, q}(B_{3/4}))}$$
for  {\it any} $t\in [-(3/4)^2, 0]$ and any $\varphi\in C_0^{\infty}(B_{3/4})$. Thus by the density of $C_0^{\infty}(B_{3/4})$ in $L^{3/2, q/(q-1)}(B_{3/4})$
we see that 
$$\norm{u(\cdot, t)}_{L^{3, q}(B_{3/4})}\leq  C \norm{u}_{L^{\infty}(-(3/4)^2,0; L^{3, q}(B_{3/4}))}$$
for  any $t\in [-(3/4)^2, 0]$. Then it can be seen, again by density, that  the function $g_{\varphi}(t)$ above is actually continuous on $[-(3/4)^2, 0]$ for any 
$\varphi\in L^{3/2, q/(q-1)}(B_{3/4})$.

Finally,  using  \eqref{uinLfour} and a standard mollification in $\RR^{3+1}$ combined with a truncation in time of test functions,   we obtain the local generalized energy equality 
in $Q_{5/6}$.
\end{proof}

We now proceed with the proof of Theorem \ref{localregularity}.

\vspace{.15in}
\noindent {\bf  Proof of Theorem \ref{localregularity}.} Henceforth, let the hypothesis of  Theorem \ref{localregularity} be enforced. Notice that by Lemma \ref{weaktosuitable} $(u,p)$ forms a suitable weak solution to the Navier-Stokes equations in $Q_{5/6}(0,0)$.
As in \cite{ESS1}, the proof of Theorem \ref{localregularity} goes by a contradiction.
Suppose that $z_0=(x_0, t_0)\in \overline{Q}_{1/2}(0,0)$ is a singular point. By definition, this means that there exists 
no neighborhood $\mathcal{N}$ of $z_0$ such that $u$ has a H\"older continuous representative on $\mathcal{N}\cap B_1(0)\times(-1,0])$.
By Lemma 3.3 of \cite{SS}, there exist $c_0>0$ and a sequence of numbers $\epsilon_k\in (0, 1)$ 
such that $\epsilon_k\rightarrow 0$ as $k\rightarrow +\infty$ and 
\begin{equation}\label{singcond}
A(z_0, \epsilon_k) = \sup_{t_0-\epsilon_k^2\leq s\leq t_0}\frac{1}{\epsilon_k}\int_{B(x_0,\epsilon_k)}|u(x,s)|^2 dx \geq c_0
\end{equation}
for any $k\in \NN$. Moreover, by Lemma \ref{weaktosuitable} we have in particular
\begin{equation}\label{att0}
u(\cdot, t_0)\in L^{3,q}(B_{3/4}(0)).
\end{equation}

Recall that we can decompose
$$p=\tilde{p}+h,$$
where $h$ is harmonic in  $B_{1}$, and 
$\tilde{p}= R_iR_j[(u_i u_j)\chi_{B_1}]$.

For each $Q=\om\times (a,b)$, where $\om\Subset\RR^3$ and $-\infty<a<b\leq 0$,   we 
choose a large $k_0=k_0(Q) \geq 1$ so that for any $k\geq k_0$ there hold
the implications
$$x\in \om \Longrightarrow x_0+\epsilon_k x\in B_{2/3},$$ 
and
$$t\in(a,b) \Longrightarrow t_0+\epsilon_k^2 t\in (-(2/3)^2, 0),$$
where the sequence $\{\epsilon_k\}$ is as in \eqref{singcond}.

Given such a $Q=\om\times (a,b)$, let us set
$$u_k(x, t)=\epsilon_k u(x_0+\epsilon_k x, t_0 +\epsilon_k^2 t),\quad p_k(x, t)=\epsilon_k^2 p(x_0+\epsilon_k x, t_0 +\epsilon_k^2 t),$$
and
$${\tilde p}_{k}(x, t)=\epsilon_k^2 \, \tilde{p}(x_0+\epsilon_k x, t_0 +\epsilon_k^2 t),\quad h_{k}(x, t)=\epsilon_k^2\, h(x_0+\epsilon_k x, t_0 +\epsilon_k^2 t)$$
for any $(x,t)\in Q$ and $k\geq k_0(Q)$.

The following proposition provides a non-trivial {\it ancient solution} (see \cite{Sere2} for this notion) that is essential in the proof of Theorem 
\ref{localregularity}.

\begin{proposition}\label{limit-infty} {\rm (i)} There exist subsequence of $(u_k,p_k)$, still denoted by $(u_k,p_k)$, and a pair of functions 
\begin{equation}\label{uinfpinf}
(u_\infty, p_\infty)\in L^{\infty}(-\infty,0; L^{3,q}(\RR^3)\times L^{\infty}(-\infty,0; L^{3/2,q/2}(\RR^3),
\end{equation}
with  ${\rm div}\, u_\infty=0$ in $\RR^3\times (-\infty, 0)$, such that 
\begin{equation}\label{Ccons}
u_k \rightarrow u_\infty \quad {\rm in} \quad C([a,b]; L^s(\om)),
\end{equation}
\begin{equation*}
\tilde{p}_k \rightarrow  p_\infty \quad {\rm weakly^{*} ~ in} \quad L^{\infty}(a,b; L^{3/2,q/2}(\om), 
\end{equation*}
for any $s\in(1,3)$, and any  $\om\Subset 
\RR^3$, $-\infty<a<b\leq0$.

\noindent {\rm (ii)} Moreover, for any $Q=\om\times (a,b)$ with $\om\Subset\RR^3$,  $-\infty<a<b\leq 0$,
$$|u_\infty|^2, \nabla u_\infty\in L^2(Q),\quad \partial_t u_\infty, \nabla^2 u_\infty, \nabla p_\infty\in L^{4/3}(Q),$$
and
$(u_\infty, p_\infty)$ forms a suitable weak solution of the Navier-Stokes equations in any such $Q$.

\noindent {\rm (iii)} Additionally, $u_\infty$ satisfies the lower bound
\begin{equation}\label{contradic-goal}
\sup_{t\in[-1,0]} \int_{B_1(0)} |u_\infty(x,t)|^2 dx \geq c_0,
\end{equation}
where $c_0>0$ is the constant in \eqref{singcond}.
\end{proposition}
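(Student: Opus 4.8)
The plan is to run a blow-up compactness argument, and the structural fact that makes it work is the scaling invariance of the borderline Lorentz norm. First I would record that under the parabolic rescaling defining $u_k$ and $p_k$, a change of variables shows that $\|u_k(\cdot,t)\|_{L^{3,q}(\om)}$ equals the $L^{3,q}$ quasinorm of $u(\cdot,t_0+\epsilon_k^2 t)$ on the rescaled ball, so that for every fixed $\om\Subset\RR^3$ and every $k\geq k_0(\om)$ one has the uniform bound $\|u_k\|_{L^\infty(a,b;L^{3,q}(\om))}\leq M:=\|u\|_{L^\infty(-(3/4)^2,0;L^{3,q}(B_{3/4}))}$. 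Since $u\in L^{3,q}$ forces $|u|^2\in L^{3/2,q/2}$ and the Riesz transforms are bounded on Lorentz spaces, the same scaling gives $\|\tilde p_k\|_{L^\infty(a,b;L^{3/2,q/2}(\om))}\leq CM^2$. These two scale-invariant bounds are the engine of the whole proposition.

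Next I would produce uniform local energy bounds through Corollary \ref{ABcontrol2}. The quantities $C(u_k,0,R)$ and $D(p_k,0,R)$ are scale invariant, so it suffices to bound them uniformly. For $C$ I would use the embedding $L^{3,q}(B_R)\hookrightarrow L^{12/5}(B_R)$ together with the $L^\infty_tL^{3,q}$ bound; matching the powers of $R$ gives $C(u_k,0,R)\leq CM^4$. For $D$ I would split $p_k=\tilde p_k+h_k$: the $\tilde p_k$-contribution is handled exactly as for $C$ using $\|\tilde p_k\|_{L^\infty_tL^{3/2,q/2}}\leq CM^2$, while for the harmonic part I would exploit that $h$ is harmonic on $B_{5/6}$ with $h\in L^2(-1,0;L^\infty(B_{5/6}))$ by \eqref{hbound}; the extra factor $\epsilon_k^2$ in $h_k$ together with interior estimates yields $\int_a^b\|h_k(\cdot,t)\|_{L^\infty(\om)}^2\,dt\leq C\epsilon_k^2\|h\|_{L^2_tL^\infty(B_{5/6})}^2\to0$, so $h_k\to0$ in $L^2_tL^\infty_{\mathrm{loc}}$ and contributes nothing to $D$ in the limit. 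Corollary \ref{ABcontrol2} then gives a uniform bound on $A(u_k,0,R/2)+B(u_k,0,R/2)$, i.e. $u_k$ is bounded in $L^\infty(a,b;L^2(\om))\cap L^2(a,b;W^{1,2}(\om))$; feeding in $u_k\in L^4_{\mathrm{loc}}$ (by the same interpolation as in Lemma \ref{weaktosuitable}) and the equation, interior Stokes regularity bounds $\partial_t u_k,\nabla^2u_k,\nabla p_k$ uniformly in $L^{4/3}_{\mathrm{loc}}$.

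With these bounds in hand the compactness is routine. An Aubin--Lions--Simon argument, using the $L^2_tW^{1,2}$ bound together with the $L^{4/3}_t$ bound on $\partial_t u_k$ in a negative Sobolev space, over a compact exhaustion of $\RR^3\times(-\infty,0)$, gives along a subsequence strong convergence of $u_k$ in $L^2_{\mathrm{loc}}$; combining this with the uniform $L^\infty_tL^{3,q}$ bound and the time-equicontinuity furnished by the $\partial_t$ estimate upgrades it to convergence in $C([a,b];L^s(\om))$ for every $s\in(1,3)$, which is \eqref{Ccons}. The pressures converge via $\tilde p_k\rightharpoonup p_\infty$ weakly-$*$ in $L^\infty_tL^{3/2,q/2}$ and $h_k\to0$. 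Lower semicontinuity of the Lorentz quasinorms under a.e. convergence, together with a diagonal argument over $\om_j\uparrow\RR^3$, places $(u_\infty,p_\infty)$ in the spaces \eqref{uinfpinf}; passing to the limit in the weak formulation (the nonlinear term through $u_k\otimes u_k\to u_\infty\otimes u_\infty$ in $L^1_{\mathrm{loc}}$ by the strong $L^2_{\mathrm{loc}}$ convergence) and in the local energy inequality (by weak lower semicontinuity of $\int|\nabla u_k|^2\phi$) shows $(u_\infty,p_\infty)$ is a suitable weak solution on every such $Q$, the memberships $|u_\infty|^2,\nabla u_\infty\in L^2(Q)$ and $\partial_t u_\infty,\nabla^2 u_\infty,\nabla p_\infty\in L^{4/3}(Q)$ following from weak lower semicontinuity of the uniform bounds established above. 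This gives (i) and (ii).

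Finally, for the lower bound (iii) I would transport \eqref{singcond} through the rescaling. A change of variables shows $A(u_k,0,1)=A(u,z_0,\epsilon_k)\geq c_0$, so $\sup_{t\in[-1,0]}\int_{B_1}|u_k(\cdot,t)|^2\,dx\geq c_0$ for every $k$. Choosing maximizing times $t_k\in[-1,0]$ and passing to a subsequence with $t_k\to t_*$, I would use the $C([-1,0];L^s(B_1))$ convergence with a fixed $s\in(2,3)$, so that $L^s(B_1)\hookrightarrow L^2(B_1)$, to conclude $u_k(\cdot,t_k)\to u_\infty(\cdot,t_*)$ in $L^2(B_1)$, whence $\int_{B_1}|u_\infty(\cdot,t_*)|^2\,dx\geq c_0$ and \eqref{contradic-goal} follows. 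I expect the delicate point of the whole argument to be precisely this last transfer and the compactness that supports it: the bound \eqref{singcond} lives at the level of the time-supremum of an $L^2$ mass, so one genuinely needs strong convergence in $C_tL^s_x$ with $s>2$ rather than mere weak convergence, and the uniform $L^{3,q}$ and energy bounds must be strong enough to produce it.
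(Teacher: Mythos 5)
Your proposal is correct and follows essentially the same route as the paper: scale-invariant $L^\infty_t L^{3,q}$ and $L^\infty_t L^{3/2,q/2}$ bounds for $u_k$ and $\tilde p_k$, the decomposition $p_k=\tilde p_k+h_k$ with $h_k\to 0$ by harmonicity and the $\epsilon_k^2$ factor, Corollary \ref{ABcontrol2} plus interior Stokes estimates for uniform energy and $L^{4/3}$ bounds, Aubin--Lions type compactness upgraded by interpolation to $C([a,b];L^s(\om))$, and transfer of \eqref{singcond} through the rescaling. The only step you pass over silently is verifying that each rescaled pair $(u_k,p_k)$ is itself a suitable weak solution (the hypothesis of Corollary \ref{ABcontrol2}), which the paper checks explicitly by rescaling the generalized energy equality from Lemma \ref{weaktosuitable}; this is a routine change of variables, and your use of $s\in(2,3)$ with the embedding $L^s(B_1)\hookrightarrow L^2(B_1)$ in part (iii) is an equally valid variant of the paper's direct use of \eqref{Ccons} with $s=2$.
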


\begin{proof}
For each $Q=\om\times (a,b)$, where $\om\Subset\RR^3$, $-\infty<a<b\leq 0$,
and for every $t\in [a,b]$ we have
\begin{equation}\label{ukboundinfty}
\norm{u_k(\cdot, t)}_{L^{3, q}(\om)}\leq \norm{u(\cdot, t_0+\epsilon_k^2 t)}_{L^{3, q}(B_{3/4})}\leq
\norm{u}_{L^{\infty}(-1, 0; L^{3, q}(B_1))}.
\end{equation}

By Calder\'on-Zygmund estimate, for a.e. $t\in (a,b)$ there holds
\begin{eqnarray}\label{ptil-k-bound}
\norm{{\tilde p}_k(\cdot, t)}_{L^{3/2, q/2}(\om)}&\leq& \norm{{\tilde p}(\cdot, t_0+\epsilon_k^2 t)}_{L^{3/2, q/2}(B_{3/4})}\\
&\leq& \esssup_{t'\in(-(3/4)^2,0)}\norm{{\tilde p}(\cdot, t')}_{L^{3/2, q/2}(B_{3/4})} \nonumber\\
&\leq& C \norm{u}_{L^{\infty}(-1, 0; L^{3, q}(B_1))}^2. \nonumber
\end{eqnarray}

On the other hand, by harmonicity  we have 
\begin{eqnarray*}
\int_{a}^{b}\sup_{x\in\om} |h_k(x,t)|^{2} dt&\leq& \epsilon_k^2 \int_{-(3/4)^2}^{0}\sup_{x\in\om} |h(x_0+\epsilon_k x,s)|^{2} ds\nonumber\\
&\leq& \epsilon_k^2 \norm{h}^{2}_{L^{2}(-1, 0; L^{\infty}(B_{3/4}))} \nonumber\\
&\leq& C\, \epsilon_k^2 \norm{h}^{2}_{L^{2}(-1, 0; L^{1}(B_{5/6}))} \nonumber
\end{eqnarray*}
provided $k\geq k_0(Q)$. Thus again  by Calder\'on-Zygmund estimate we find
\begin{eqnarray}\label{h-k-bound}
\lefteqn{\int_{a}^{b}\sup_{x\in\om} |h_k(x,t)|^{2} dt}\\
&\leq& C\, \epsilon_k^2 \norm{p-\tilde{p}}^{2}_{L^{2}(-1, 0; L^{1}(B_{5/6}))} \nonumber\\
&\leq& C \,\epsilon_k^2  \Big[\norm{p}_{L^{2}(-1, 0; L^{1}(B_{1}))}^2+ \norm{u}_{L^{\infty}(-1, 0; L^{3, q}(B_1))}^4\Big].\nonumber
\end{eqnarray}

Using the last estimates for ${\tilde p}_{k}$ and $h_k$ and H\"older's inequality we have the following uniform bound for $p_k$:
\begin{eqnarray}\label{pkbound}
\lefteqn{\norm{p_k}_{L^{2}(a, b; L^{6/5}(\om))}\leq \norm{{\tilde p}_k}_{L^{2}(a, b; L^{6/5}(\om))} + \norm{h_k}_{L^{2}(a, b; L^{6/5}(\om))}}\\
&\leq& C(Q)\Big[ \norm{p}_{L^{2}(-1, 0; L^{1}(B_{1}))}+ \norm{u}_{L^{\infty}(-1, 0; L^{3, q}(B_1))}^2 \Big] \nonumber
\end{eqnarray}
for any  $k\geq k_0(Q)$. Here the constant $C(Q)$ is independent of such $k$.

With regard to $u_k$,  with $k\geq k_0(Q)$, we have 
\begin{eqnarray}\label{ukbound}
\norm{u_k}_{L^{4}(a, b; L^{12/5}(\om))} &\leq& C(Q) \norm{u_k}_{L^{\infty}(a, b; L^{3, q}(\om))}\\
&\leq& C(Q)  \norm{u}_{L^{\infty}(-1, 0; L^{3, q}(B_1))}. \nonumber
\end{eqnarray}

For each $\varphi\in C_0^{\infty}(\RR^3\times \RR)$ that vanishes in a neighborhood of the parabolic boundary 
$\partial'Q=\om\times\{t=a\} \cup \partial\om\times [a, b]$, we  define
$$\varphi_k(x,t)=\epsilon_k^{-1}\varphi(\epsilon_k^{-1}(x-x_0), \epsilon_k^{-2}(t-t_0)).$$ 
Then with $k\geq k_0(Q)$ we see that $\varphi_k$  vanishes in a neighborhood of the parabolic boundary of $Q_{3/4}(0,0)$.
Using $\varphi_k$ as a test function in the generalized energy equality for $(u, p)$ at
$t=t_0+ \epsilon_k^2 \tau$ with a.e. $\tau\in (a, b)$ we find
\begin{eqnarray*}
\lefteqn{\int_{B_{3/4}}|u(x, t)|^2 \varphi_k(x,t) dx + 2\int_{-(3/4)^2}^t\int_{B_{3/4}} |\nabla u|^2 \varphi_k(x, s) dxds} \\
&=& \int_{-(3/4)^2}^t\int_{B_{3/4}} |u|^2 (\partial_t\phi_k +\Delta \phi_k) dx ds  \\
&& \quad +\, \int_{-(3/4)^2}^t\int_{B_{3/4}}(|u|^2 + 2p)u\cdot \nabla \varphi_k dx ds.
\end{eqnarray*}
Hence by making a change of variables we obtain

\begin{eqnarray*}
\lefteqn{ \int_{\om}|u_k(y, \tau)|^2 \varphi(y,\tau) dy + 2\int_{a}^{\tau}\int_{\om} |\nabla u_k|^2 \varphi(y, s') dyds'} \\
&=& \int_{a}^{\tau}\int_{\om} |u_k|^2 (\phi_t +\Delta \phi) dy ds' + \int_{a}^{\tau}\int_{\om}(|u_k|^2 + 2p_k)u_k\cdot \nabla \varphi dy ds'
\end{eqnarray*}
for a.e. $\tau \in (a,b)$.

Thus each $u_k$ is a suitable solution in $Q$ for any $Q=\om\times (a,b)$, with $\om\Subset\RR^3$ and $-\infty<a<b\leq 0$, and any $k\geq k_0(Q)$.
Then, given such a $Q$, it follows from Corollary \ref{ABcontrol2}  and inequalities \eqref{pkbound}--\eqref{ukbound}  (applied to an appropriate enlargement of $Q$) that 
\begin{equation}\label{ABkbound}
\norm{u_k}_{L^{\infty}(a, b; L^{2}(\om))}  + \norm{\nabla u_k}_{L^{2}(a, b; L^{2}(\om))}\leq C(Q)
\end{equation}
for all sufficiently large $k$ depending only on $Q$.

Using \eqref{ABkbound} and   Sobolev  inequality we have
\begin{equation*}
\norm{u_k}_{L^{2}(a, b; L^{6}(\om))}  \leq C(Q),
\end{equation*}
which by \eqref{ukboundinfty}, interpolation, and H\"older's inequality  gives
\begin{equation}\label{uknonlinearterm}
\norm{u_k}_{L^4(\om\times (a,b))} + \norm{u_k\cdot\nabla u_k}_{L^{4/3}(\om\times (a,b))}\leq C.
\end{equation}

From the   bounds \eqref{ptil-k-bound} and \eqref{h-k-bound}
for $\tilde{p}_k$ and $h_k$  we also have 
\begin{equation}\label{pk3halfs}
\norm{p_k}_{L^{s}(\om\times(a,b))}  \leq C(Q, s) \norm{p_k}_{L^{2}(a,b; L^{3/2, q/2}(\om))} \leq C
\end{equation}
for any $s\in (0, 3/2)$.

Using \eqref{ABkbound}--\eqref{pk3halfs}, it follows from the local interior regularity  of solutions to non-stationary Stokes equations 
we find 
\begin{equation}\label{partterm}
\norm{\partial_t u_k}_{L^{4/3}(\om\times (a,b))} + \norm{\nabla^2 u_k}_{L^{4/3}(\om\times (a,b))} + \norm{ \nabla p_k}_{L^{4/3}(\om\times (a,b))}\leq C
\end{equation}
for all sufficiently large $k$ depending only on $Q$.

At this point, using \eqref{ukboundinfty}--\eqref{ptil-k-bound} and a diagonal process we may assume that 
\begin{equation*}
u_k \rightarrow u_\infty \quad {\rm weakly^{*} ~ in} \quad L^{\infty}(a,b; L^{3,q}(\om) 
\end{equation*}
\begin{equation*}
\tilde{p}_k \rightarrow  p_\infty \quad {\rm weakly^{*} ~ in} \quad L^{\infty}(a,b; L^{3/2,q/2}(\om), 
\end{equation*}
for a pair of functions $(u_\infty, p_\infty)$ satisfying \eqref{uinfpinf},
with  ${\rm div}\, u_\infty=0$ in $\RR^3\times (-\infty, 0)$.

Estimates  \eqref{ABkbound} and \eqref{partterm} now yield 
%\begin{equation*}
%u_k \rightarrow u_\infty \quad {\rm in} \quad L^p(a,b; L^p(\om))
%\end{equation*}
%for any $p\in (1, 4)$, and 
\begin{equation}\label{con4thirds}
u_k \rightarrow u_\infty \quad {\rm in} \quad C([a,b]; L^{4/3}(\om)).
\end{equation}

For any $s\in (1, 3)$, the uniform bound \eqref{ukboundinfty}, and the interpolation inequality
\begin{eqnarray*}
\lefteqn{\norm{u_k(\cdot, t) -u_k(\cdot, t')}_{L^s(\om)}}\\
&\leq& C(s) \norm{u_k(\cdot, t) -u_k(\cdot, t')}_{L^{4/3}(\om)}^{\frac{12}{5}\left(\frac{1}{s}-\frac{1}{3}\right)} \norm{u_k(\cdot, t) -u_k(\cdot, t')}_{L^{3,q}(\om)}^{\frac{12}{5}\left(\frac{3}{4}-\frac{1}{s}\right)}
\end{eqnarray*}
imply that each $u_k\in C([a,b]; L^s(\om)$. Thus by using \eqref{con4thirds} and interpolating  we obtain
\eqref{Ccons} for any $s\in (1, 3)$. This completes the proof of  ${\rm (i)}$.

On the other hand, by \eqref{h-k-bound} we have 
\begin{equation*}
h_k \rightarrow 0 \quad {\rm strongly ~ in} \quad L^{2}(a,b; L^{\infty}(\om), 
\end{equation*}
for any $-\infty<a<b\leq 0$ and $\om\Subset\RR^3$, and thus in the limit $(u_\infty, p_\infty)$ satisfies the Navier-Stokes equations in the sense of 
distributions in $\om\times (a,b)$.  Now ${\rm (ii)}$ follows from ${\rm (i)}$, \eqref{ABkbound} and \eqref{partterm} via an argument as in the proof of Lemma \ref{weaktosuitable}.

Finally, note that by \eqref{singcond} and a change of variables we have 
$$\sup_{-1\leq t\leq 0}\int_{B(0,1)}|u_k(x,t)|^2 dx=\sup_{t_0-\epsilon_k^2\leq s\leq t_0}\frac{1}{\epsilon_k}\int_{B(x_0,\epsilon_k)}|u(y,s)|^2 dy \geq c_0.$$
Thus using \eqref{Ccons} with $s=2$ we obtain \eqref{contradic-goal}, which proves ${\rm (iii)}$.
\end{proof}

%\begin{remark} {\rm Proposition \ref{limit-infty} also applies to  the case $q=\infty$.}\end{remark}

We now continue with the proof of Theorem \ref{localregularity}. By ${\rm (i)}$ of Proposition \ref{limit-infty}, we have 
$$\int_{-M}^{0} (\norm{u_\infty(\cdot, t)}_{L^{3,q}(\RR^3)}^{4} + \norm{p_\infty(\cdot, t)}_{L^{3/2,q/2}(\RR^3)}^{2})dt<+\infty$$
for any real number $M>0$. Note that for a.e. $t$,
$$\norm{u_\infty(\cdot, t)}_{L^{3,q}(\RR^3\setminus \overline{B}_R(0))}^{4} + \norm{p_\infty(\cdot, t)}_{L^{3/2,q/2}(\RR^3\setminus \overline{B}_R(0))}^{2}
\rightarrow 0$$
as $R\rightarrow +\infty$. We thus have 
$$\int_{-M}^{0} (\norm{u_\infty(\cdot, t)}_{L^{3,q}(\RR^3\setminus \overline{B}_R(0))}^{4} + \norm{p_\infty(\cdot, t)}_{L^{3/2,q/2}(\RR^3\setminus \overline{B}_R(0))}^{2})dt\rightarrow 0$$
as $R\rightarrow +\infty$. This yields that for any $M>200$ there exists $N=N(\epsilon_1, M)>10$ such that 
$$\int_{-M}^{0} (\norm{u_\infty(\cdot, t)}_{L^{3,q}(\RR^3\setminus \overline{B}_N(0))}^{4} + \norm{p_\infty(\cdot, t)}_{L^{3/2,q/2}(\RR^3\setminus \overline{B}_N(0))}^{2})dt\leq \epsilon_2,$$
where 
\begin{equation}\label{epsilon2}
\epsilon_2 =8^2|B_1(0)|^{-1/3}\epsilon_1
\end{equation}
with $\epsilon_1$ being as found in Proposition \ref{epsilon1}.

We now fix such numbers $M$ and $N$ and consider any $z_1$ that 
$$z_1=(x_1,t_1)\in (\RR^3\setminus \overline{B}_{2N}(0))\times (-M/2, 0].$$
Then there holds 
$$Q_8(z_1)=B_8(x_1)\times (t_1-8^2, t_1)\subset (\RR^3\setminus \overline{B}_{N}(0))\times (-M, 0],$$
and hence 
\begin{equation}\label{up3}
\int_{t_1-8^2}^{t_1} (\norm{u_\infty(\cdot, t)}_{L^{3,q}(B_8(x_1))}^{4} + \norm{p_\infty(\cdot, t)}_{L^{3/2,q/2}(B_8(x_1))}^{2})dt\leq \epsilon_2.
\end{equation}

Since 
\begin{eqnarray*}
\lefteqn{\norm{u_\infty(\cdot, t)}_{L^{12/5}(B_8(x_1))}^{4} + \norm{p_\infty(\cdot, t)}_{L^{6/5}(B_8(x_1))}^{2}}\\
&\leq& |B_8(x_1)|^{1/3} \left\{\norm{u_\infty(\cdot, t)}_{L^{3,q}(B_8(x_1))}^{4}+ \norm{p_\infty(\cdot, t)}_{L^{3/2,q/2}(B_8(x_1))}^{2}\right\}
\end{eqnarray*}
we see from \eqref{epsilon2}--\eqref{up3} that 
\begin{equation}\label{CDz1}
C(u_\infty,z_1,8) + D(p_\infty,z_1,8) \leq \epsilon_1.
\end{equation}

The smallness property \eqref{CDz1} and Proposition \ref{epsilon1} now yield that $\nabla^k u_\infty$, $k=0,1,2, \dots$, is H\"older continuous 
on $(\RR^3\setminus \overline{B}_{2N}(0))\times (-M/2, 0]$, and 
\begin{equation}\label{reg-bound}
\max_{z\in \overline{Q}_{1/2}(z_1)} |\nabla^{k} u_\infty(z)| \leq C_k.
\end{equation}

Let $\om_\infty={\rm curl}\, u_\infty$ be the vorticity of $u_\infty$. Then $\om_\infty$ satisfies the equation 
$$\partial_t \om_\infty- \Delta \om_\infty + (u_{\infty} \cdot \nabla) \om_\infty - (\om_{\infty}\cdot \nabla) u_\infty=0$$
on the set $(\RR^3\setminus \overline{B}_{4N}(0))\times (-M/4, 0]$, which by \eqref{reg-bound} gives
\begin{equation}\label{vorticbound1}
|\partial_t \om_\infty- \Delta \om_\infty|\leq C(|\om_\infty| + |\nabla \om_\infty|) 
\end{equation}
with  
\begin{equation}\label{vorticbound2}
|\om_\infty| \leq C<+\infty
\end{equation}
on the set  $(\RR^3\setminus \overline{B}_{4N}(0))\times (-M/4, 0]$, for a universal  constant $C>0$.

We now claim that 
\begin{equation}\label{zerooutside}
\om_\infty=0 \quad {\rm on~} (\RR^3\setminus \overline{B}_{4N}(0))\times (-M/4, 0].
\end{equation}

To see this, by applying the backward uniqueness theorem (see \cite[Theorem 5.1]{ESS1} and \cite{ESS2}) and the bounds 
\eqref{vorticbound1}--\eqref{vorticbound2}, it is enough to show that 
\begin{equation}\label{vorzeroat0}
\om_\infty(y,0)=0 \quad {\rm for ~ all~} y\in \RR^3\setminus \overline{B}_{4N}(0)).
\end{equation}

Note that   for any $y\in \RR^3$ we have 
\begin{eqnarray*}
\lefteqn{\int_{B_1(y)} |u_\infty(x, 0)| dx}\\
&\leq& \int_{B_1(y)} |u_\infty(x, 0)-u_k(x,0)| dx + \int_{B_1(y)} |u_k(x, 0)| dx\\
&\leq& \int_{B_1(y)} |u_\infty(x, 0)-u_k(x,0)| dx + |B_1(0)|^{\frac{2}{3}}\norm{u_k(\cdot,0)}_{L^{3,q}(B_1(y))}\\
&\leq& \norm{u_\infty-u_k}_{C([-M/4, 0]; L^1(B_1(y)))} + |B_1(0)|^{\frac{2}{3}}\norm{u(\cdot,t_0)}_{L^{3,q}(B_{\epsilon_k}(x_0+\epsilon_k y))}.
\end{eqnarray*}

Thus sending $k\rightarrow +\infty$ we see that 
$$\int_{B_1(y)} |u_\infty(x, 0)| dx=0 $$
for all $y\in \RR^3$, which yields \eqref{vorzeroat0} as desired. Here we have used ${\rm (i)}$ of Proposition \ref{limit-infty} and \eqref{att0}.

At this point using \eqref{zerooutside} combined with the argument on pages 227--229 of \cite{ESS1}, which ultimately employs 
the theory of unique continuation for parabolic inequalities, we see that in fact 
$$\om_\infty(\cdot,t)=0\quad {\rm in~ the~ whole~} \RR^3$$
 for a.e. $t\in (-M/4, 0)$. Thus $u_\infty(\cdot,t)$ is globally  harmonic and by a Liouville theorem it follows that  $u_\infty(\cdot,t)=0$ for a.e. 
$t\in (-M/4, 0)$. This leads to a contradiction to the lower bound \eqref{contradic-goal} and hence completes the proof of  Theorem \ref{localregularity}.

We next prove Theorem \ref{nearMarc}.

\begin{proof}[{\bf Proof of Theorem \ref{nearMarc}}] Arguing as in the proof of Lemma 
\ref{weaktosuitable} we see that $(u,p)$ forms a suitable solution to the Navier-Stokes equations in $Q_{5/6}$.

Suppose that $(x_0,t_0)\in \overline{Q}_{1/2}(0,0)$ is a singular point.
Then  we must have that $x_0=0$. 
By Lemma 3.3 of \cite{SS}, there exist $c_0>0$ and a sequence of numbers $\epsilon_k\in (0, 1/8)$ 
such that $\epsilon_k\rightarrow 0$ as $k\rightarrow +\infty$ and 
\begin{equation*}
\sup_{t_0-\epsilon_k^2\leq s\leq t_0}\frac{1}{\epsilon_k}\int_{B(0,\epsilon_k)}|u(x,s)|^2 dx \geq c_0.
\end{equation*}

Using a change of variables and the condition \eqref{LinftyX}, we then have
\begin{eqnarray*}
0<c_0 &\leq& \sup_{-1\leq t\leq 0} \int_{B(0,1)}|\epsilon_k u(\epsilon_k y, t_0+\epsilon_k^2 t)|^2 dy\\
&\leq& \norm{f}^2_{L^\infty((-1,0))} \int_{B_1(0)} |y|^{-2} g(\epsilon_k y)^2 dy.
\end{eqnarray*}

By the property of $g$, this is impossible to hold for all $k\in \NN$, and thus  the proof of  Theorem \ref{nearMarc} is complete.
\end{proof}

\section{Proof of Theorems \ref{global-ESS-lorentz}}
We shall prove  Theorems \ref{global-ESS-lorentz}  in this section.  First observe that under the hypothesis of Theorem \ref{global-ESS-lorentz}, we have 
\begin{eqnarray*}
\norm{u(\cdot, t)}_{L^4(\RR^3)} &\leq& C \norm{u(\cdot,t)}_{L^{3,q}(\RR^3)}^{\frac{1}{2}} \norm{u(\cdot,t)}_{L^6(\RR^3)}^{\frac{1}{2}}\\
&\leq& C \norm{u(\cdot,t)}_{L^{3,q}(\RR^3)}^{\frac{1}{2}} \norm{\nabla u(\cdot,t)}_{L^2(\RR^3)}^{\frac{1}{2}}.
\end{eqnarray*}

Thus,
\begin{equation}\label{L4}
u\in L^4(Q_T) \quad {\rm and} \quad u\cdot\nabla u\in L^{4/3}(Q_T),
\end{equation}
where the latter follows from H\"older's inequality. Using these inclusions, the coercive estimates (see \cite{GS, MS, Sol})  and the uniqueness theorem (see \cite{Lad1}) for the Stokes problem, we can introduce the  {\it associated pressure} $p$ such that  
$$\partial_t u, \nabla^2 u, \nabla p\in L^{4/3}(\RR^3\times (\delta, T))$$
for any $\delta \in (0, T)$. Moreover, it follows from the pressure equation and the global condition \eqref{PSLcrit-lorentz} that 
\begin{equation}\label{presurelorentz}
p\in L^\infty(0, T; L^{3/2, q/2}(\RR^3)).
\end{equation}

Arguing as in the proof of Lemma \ref{weaktosuitable} we see that $(u,p)$ forms a suitable weak solution in any bounded  cylinder of $Q_T$. By 
\eqref{PSLcrit-lorentz} and \eqref{presurelorentz}, we have  
\begin{equation}\label{T-for-decay}
\int_{0}^{T} (\norm{u(\cdot, t)}_{L^{3,q}(\RR^3)}^{4} + \norm{p(\cdot, t)}_{L^{3/2,q/2}(\RR^3)}^{2})dt\leq C(T)<+\infty.
\end{equation}

We next fix a $\delta\in (0, T)$ and set $r_0=\sqrt{\delta/128}$. Then using \eqref{T-for-decay} we find a large number $R=R(T,\delta)>0$ such that  
\begin{equation*}
C(u, z_0, 8r_0) +  D(p, z_0, 8r_0)\leq \epsilon_1
\end{equation*}
for any $z_0=(x_0,t_0)\in \RR^3\setminus B_R(0)\times [\delta, T]$. Thus by scaling and Proposition \ref{epsilon1}, there holds 
$$\sup_{\RR^3\setminus B_R(0)\times[\delta, T]} |u|\leq C(\delta).$$ 

On the other hand, for any $z_0=(x_0,t_0)\in  \overline{B}_R(0)\times [\delta, T]$ and with $r_0$ as above, $(u,p)$ is a suitable solution in
$Q_{r_0}(z_0)$. Thus by scaling, Theorem \ref{localregularity}, and the compactness of $\overline{B}_R(0)\times [\delta, T]$, we have 
$$\sup_{\overline{B}_R(0)\times[\delta, T]} |u|\leq C(\delta).$$

Combining the last two bounds we obtain 
\begin{equation*}
\sup_{\RR^3\times[\delta, T]} |u|\leq C(\delta)<+\infty
\end{equation*}
which holds for any $\delta\in(0,T)$. Thus $u$ is smooth on $\RR^3\times (0,T]$, and  using $u\in L^4(Q_T)$ (see \eqref{L4}) and interpolation we see that $u\in L^5(\RR^3\times(\delta,T))$
for any $\delta\in (0, T)$.

On the other hand, if $a\in \dot{J}\cap L^3$  then by local strong solvability  and weak-strong uniqueness $u\in L^5(\RR^3\times(0,\delta_0))$ for some $\delta_0>0$ (see, e.g., \cite[Theorem 7.4]{ESS1} and \cite{Kat}).
Thus we conclude that $u\in L^5(\RR^3\times(0,T))$ and hence by Theorem \ref{PSL} it is unique in $Q_T$.

\begin{center} {\sc Acknowledgements}
\end{center}

The author wishes to acknowledge the  support from the Hausdorff Research Institute for Mathematics  (Bonn, Germany),  where  this 
work has been finalized. Part of this work was also done during a visit  to the Institut Mittag-Leffler (Djursholm,
Sweden).


\begin{thebibliography}{xx}
\bibitem{CKN} L. Caffarelli, R.-V. Kohn, and L. Nirenberg, {\it Partial regularity of suitable weak solutions of
the Navier-Stokes equations}, Comm. Pure Appl. Math. {\bf 35} (1982), 771--831.

\bibitem{CP} J.-Y. Chemin and F.  Planchon, {\it Self-improving bounds for the Navier-Stokes equations},
Bull. Soc. Math. France {\bf 140} (2012) 583–-597.

\bibitem{CSTY1} C.-C. Chen, R. M. Strain, T.-P. Tsai, and H.-T. Yau, {\it Lower bounds on the blow-up rate of the axisymmetric Navier-Stokes equations
II}, Comm. Partial Differential Equations, {\bf 34} (2009), 203--232.

\bibitem{CSTY2} C.-C. Chen, R. M. Strain, H.-T. Yau, and T.-P. Tsai, {\it Lower bound on the blow-up rate of the axisymmetric Navier-Stokes equations},
Int. Math. Res. Not. IMRN,  2008, no. 9, Art. ID rnn016, 31 pp.

\bibitem{ESS1} L. Escauriaza,  G. Seregin, and V. {\v S}ver\'ak,  {\it $L_{3, \infty}$-solutions of Navier-Stokes equations and backward uniqueness}, (Russian) Uspekhi Mat. Nauk {\bf 58} (2003),  3--44; 
translation in Russian Math. Surveys {\bf 58} (2003),  211--250.

\bibitem{ESS2} L. Escauriaza, G. Seregin, and V. {\v S}ver\'ak, {\it Backward uniqueness for parabolic equations}, 
Arch. Ration. Mech. Anal. {\bf 169} (2003),  147--157.

\bibitem{GKP} I. Gallagher, G.  Koch, and F. Planchon,  {\it A profile decomposition approach to the $L^\infty_t(L^3_x)$ Navier-Stokes regularity 
criterion}, Math. Ann. {\bf 355} (2013),  1527--1559.

\bibitem{GKP2} I. Gallagher, G.  Koch, and F. Planchon,  {\it Blow-up of critical Besov norms at a potential Navier-Stokes singularity},
Preprint 2014.  arXiv:1407.4156.  


\bibitem{GS} Y. Giga and H. Sohr, {\it Abstract $L^p$-estimates for the Cauchy problem with applications to
the Navier-Stokes equations in exterior domains}, J. Funct. Anal. {\bf 102} (1991), 72--94.

\bibitem{Giu} E. Giusti, {\it Direct methods in the calculus of variations}, World Scientific Publishing Co., Inc., River Edge, NJ, 2003.


\bibitem{Hop} E. Hopf, {\it \"Uber die Anfangswertaufgabe f\"ur die hydrodynamischen Grundgleichungen},
Math. Nachr. {\bf 4} (1951), 213--231.

 
\bibitem{Kat} T. Kato, {\it Strong $L^p$-solutions of the Navier-Stokes equations in $R^m$ with applications to
weak solutions}, Math. Z. {\bf 187} (1984), 471--480.

\bibitem{KeKo} C.  Kenig and G. Koch, {\it An alternative approach to the Navier-Stokes 
equations in critical spaces}, Ann. Inst. H. Poincar\'e Anal. Non Lin\'eaire {\bf 28} (2011),  159--187.


\bibitem{KK} H. Kim and H. Kozono, {\it Interior regularity criteria in weak spaces for the Navier-Stokes equations},
 Manuscripta Math. {\bf 115} (2004), 85--100.

\bibitem{KS} H. Kozono and H. Sohr, {\it Remark on uniqueness of weak solutions to the Navier-Stokes
equations}, Analysis {\bf 16} (1996), 255--271. 

\bibitem{Lad1} O. A. Ladyzhenskaya, {\it Mathematical problems of the dynamics of viscous incompressible
fluids}, Gos. Izdat. Fiz.-Mat. Lit., Moscow 1961; 2nd rev. aug. ed. of English transl.,
{\it The mathematical theory of viscous incompressible flow}, Gordon and Breach, 
New York-London 1969.

\bibitem{Lad} O.  Ladyzhenskaya, {\it On uniqueness and smoothness of generalized solutions to the Navier-Stokes 
equations}, Zap. Nauchn. Sem. Leningrad. Otdel. Mat. Inst. Steklov. (LOMI) {\bf 5}
(1967), 169--185; English transl., Sem. Math. V.A. Steklov Math. Inst. Leningrad {\bf 5} (1969),
60--66.

\bibitem{LS} O.  Ladyzhenskaya and G.  Seregin, {\it On partial regularity of suitable weak solutions to
the three-dimensional Navier–Stokes equations}, J. Math. Fluid Mech. {\bf 1} (1999), 356--387.

\bibitem{Ler} J. Leray, {\it Sur le mouvement d'un liquide visqueux emplissant l'espace}, Acta Math. {\bf 63}
(1934), 193--248.

\bibitem{Lin} F.-H. Lin, {\it A new proof of the Caffarelli-Kohn-Nirenberg theorem}, Comm. Pure Appl.
Math. {\bf 51} (1998), 241--257.

\bibitem{LT} Y. Luo and T.-P. Tsai,  {\it Regularity criteria in weak $L^3$ for $3D$ incompressible
Navier-Stokes equations}, Preprint 2014. arXiv:1310.8307v5.

\bibitem{MS} P. Maremonti and V. A. Solonnikov, {\it On estimates for the solutions of the nonstationary
Stokes problem in anisotropic Sobolev spaces with a mixed norm}, Zap. Nauchn. Sem.
S.-Peterburg. Otdel. Mat. Inst. Steklov. (POMI) {\bf 222} (1995), 124--150; English transl.,
J. Math. Sci. (New York) {\bf 87} (1997), 3859--3877.

\bibitem{NRS} J. Ne{\v c}as, M. R{\r u}{\v z}i{\v c}ka, and V. {\v S}ver\'ak {\it On Leray's self-similar solutions of the Navier-Stokes
equations}, Acta Math. {\bf 176} (1996), 283--294.

\bibitem{Pro} G. Prodi, {\it Un teorema di unicit\`a per le equazioni di Navier-Stokes}, Ann. Mat. Pura Appl.
(4) {\bf 48} (1959), 173--182.

\bibitem{Sche1} V. Scheffer, {\it Partial regularity of solutions to the Navier-Stokes equations}, Pacific J. Math.
{\bf 66} (1976), 535--552.

\bibitem{Sche2} V. Scheffer, {\it Hausdorff measure and the Navier-Stokes equations}, Comm. Math. Phys. {\bf 55}
(1977), 97--112.

\bibitem{Sche3} V. Scheffer, {\it The Navier-Stokes equations in a bounded domain}, Comm. Math. Phys. {\bf 73}
(1980), 1--42.

\bibitem{Sche4} V. Scheffer, {\it Boundary regularity for the Navier-Stokes equations in a half-space}, Comm.
Math. Phys. {\bf 85} (1982), 275--299.


\bibitem{SS} G. Seregin and V. {\v S}ver\'ak, {\it Navier-Stokes equations with lower bounds on the pressure}, Arch. Ration. Mech. Anal. {\bf 163} (2002),  65--86. 

\bibitem{SS2} G. Seregin and V. {\v S}ver\'ak, {\it On type I singularities of the local axi-symmetric
solutions of the Navier-Stokes equations}, Comm. Partial Differential Equations,
{\bf 34} (2009), 171--201.

\bibitem{Sere} G. Seregin, {\it A certain necessary condition of potential blow up for Navier-Stokes equations},
 Commun. Math. Phys. {\bf 312} (2012), 833--845. 

\bibitem{Sere2} G. Seregin, {\it Selected topics of local regularity theory for the Navier-Stokes equations}. Topics in mathematical 
fluid mechanics, 239--313, Lecture Notes in Math. {\bf 2073}, Springer, Heidelberg, 2013.

\bibitem{Ser1} J. Serrin, {\it On the interior regularity of weak solutions of the Navier–Stokes equations},
Arch. Ration. Mech. Anal. {\bf 9} (1962), 187--195.

\bibitem{Ser2} J. Serrin, {\it The initial value problem for the Navier–Stokes equations}, Nonlinear Problems
(R. Langer, ed.), Univ. of Wisconsin Press, Madison 1963, pp. 69--98

\bibitem{Soh} H. Sohr, {\it A regularity class for the Navier-Stokes equations in Lorentz spaces}, J.
Evol. Equ. {\bf 1} ( 2001), 441--467.

\bibitem{Sol} V. A. Solonnikov, {\it Estimates of the solutions of a nonstationary linearized system of
Navier-Stokes equations}, Trudy Mat. Inst. Steklov. {\bf 70} (1964), 213--317; English transl., Amer.
Math. Soc. Transl. (2) {\bf 75} (1968), 1--116.



\bibitem{Stru} M. Struwe, {\it On partial regularity results for the Navier-Stokes equations}, Comm. Pure
Appl. Math. {\bf 41} (1988), 437--458.

\bibitem{Tak} S. Takahashi, {\it On interior regularity criteria for weak solutions of the Navier--Stokes
equations}, Manuscripta Math. {\bf 69} (1990), 237--254.

\end{thebibliography}
\end{document}